\newtheorem{lem}{Lemma}[section]
\newtheorem{dfn}[lem]{Definition}
\newtheorem{pro}[lem]{Proposition}
\newtheorem{thm}[lem]{Theorem}
\newtheorem{rem}[lem]{Remark}
\newtheorem{cor}[lem]{Corollary}
\numberwithin{equation}{section}
\def\F{\mathcal F}
\def\N{\mathbb N}
\def\C{\mathcal C}
\newcommand\hdim{\dim_{\mathrm H}}
\def\R{\mathfrak R}
\def\Z{\mathbb Z}
\def\W{\mathcal W}
\def\q{\bold q}
\def\t{\bold t}
\def\w{\bold w}
\def\a{\bold a}
\def\K{\mathcal K}
\newcommand\da{Diophantine approximation}
\newcommand{\ignore}[1]{}
\newcommand {\new}[1]   {\textcolor{blue}{#1}}
\newif\ifdraft\drafttrue
\newcommand\eq[2]{{\ifdraft{\ \tt [#1]}\else\ignorespaces\fi}\begin{equation}\label{eq:#1}{#2}\end{equation}}
\newcommand {\equ}[1]     {\eqref{eq:#1}}
\title[Measure of limsup sets defined by rectangles]
 {Measure theoretic laws for limsup sets \\ defined by rectangles} 
\author[D. Kleinbock]{Dmitry ~Kleinbock}
\address {Dmitry Kleinbock, Brandeis University, Waltham MA 02454-9110.} \email{kleinboc@brandeis.edu}
\author[B. Wang]{Baowei Wang}
\address{Bao-wei Wang, School  of  Mathematics  and  Statistics,  Huazhong  University  of Science  and  Technology, 430074 Wuhan, China}
 \email{bwei\_wang@hust.edu.cn}
\thanks{DK was supported by NSF grants DMS-1900560 and DMS-2155111. This material is based upon work supported by a grant from the Institute for Advanced Study School
of Mathematics.}
\subjclass[2010]{Primary 11J83; Secondary 11K60}
\begin{document}

\maketitle

\begin{abstract}
In this paper, we present a general principle for the Lebesgue measure theory of limsup sets defined by rectangles under the hypothesis of {\em ubiquity for rectangles}.
\end{abstract}

\section{Introduction}

Metric Diophantine approximation is originated from the study of how well a real number can
be approximated by rationals in the sense of measure and dimension. Since the pioneer work of Khintchine \cite{Khint}, Jarn\'{i}k \cite{Jarn1}, Groshev \cite{Gro}, Sprindzuk \cite{Sprindzuk} and Schmidt \cite{Sch1} where the metric theory of concrete sets in classic Diophantine approximation are presented, a corner stone is the introduction of the terminology of {\em regular systems} by Baker \& Schmidt \cite{BakS} in 1970, which has opened up a possibility about general principles for the metric theory of limsup sets.
\begin{dfn}[Regular system]
 Let $I$ be an interval in $\mathbb{R}$ and let $\Gamma=\{\gamma_n\}_{n\ge 1}$ be a sequence of real numbers in $I$, together with a positive function $\mathcal{N}:\Gamma\to \mathbb{R}_+$. Call $(\Gamma, \mathcal{N})$ a regular system, if for any subinterval $J$ of length $|J|$  there exists an integer $K_J$ such that for any $K\ge K_J$, there exist elements $\gamma_{1},\dots, \gamma_{t}$ in $\Gamma\cap J$ such that \begin{equation*}\label{0}
t\ge c(\Gamma)\cdot |J|\cdot K,\ \ {\text{and}}\ \ \mathcal{N}(\gamma_i)\le K,\ |\gamma_{i}-\gamma_{j}|\ge K^{-1}  \ {\text{for all}}\ 1\le i\ne j\le t,
\end{equation*} where $c(\Gamma)$ is an absolute constant.
\end{dfn}

Equipped with the assumption of $(\Gamma, \mathcal{N})$ being a regular system, one is able to set up a complete metric theory for sets of the form$$
\{x\in I: |x-\gamma|<\psi\big(\mathcal{N}(\gamma)\big), \ {\text{for infinitely many}}\ \gamma\in \Gamma\}.
$$ See Baker \& Schmidt \cite{BakS} for Hausdorff theory and Beresnevich \cite{Ber99} for Lebesgue theory among others. As special cases, it can be applied to the approximation by rational numbers and algebraic numbers.

 The notion of  regular systems in $\mathbb{R}$ was extended to 
 ubiquitous systems in $\mathbb{R}^d$ by Dodson, Rynne \& Vickers \cite{DoRV}
in 1990 to establish general principle for the Hausdorff theory of limsup sets in higher dimensional spaces. This  includes \da\ for systems of linear forms and beyond. In 2006  the notion of ubiquitous systems in $\mathbb{R}^d$ was further generalized to the setting of abstract metric spaces by Beresnevich, Dickinson \& Velani \cite{BDV06}. 
In this paper we call it {\em ubiquity for balls}.
\begin{dfn}[Ubiquity for balls]\label{def_ub}
  Let $X$ be a locally compact metric space with a finite Borel measure $\mu$. Let $\{\R_{\alpha}: \alpha\in J\}$ be a sequence of subsets in $X$ and let $\beta: J\to \mathbb{R}_+$ be a positive function attaching a weight to $\alpha\in J$. Let $\rho: \mathbb{R}_+\to \mathbb{R}_+$ be a function (henceforth referred to as the ubiquitous function), and let  $\{\ell_n, u_n: n\ge 1\}$ be two sequences of positive numbers with $u_n\ge \ell_n\to \infty$ as $n\to \infty$. Call $\big(\{\R_{\alpha}\}_{\alpha\in J}, \rho\big)$ a ubiquitous system  if for any ball $B\subset X$ there exists $n_o(B)$ such that for all $n\ge n_o(B)$\begin{equation}\label{fff7}
  \mu\left(B\cap \bigcup_{\alpha\in J: \ell_n\le \beta_{\alpha}\le u_n}\Delta\big(\R_{\alpha}, \rho(u_n)\big)\right)\ge c(J)\cdot \mu(B),
  \end{equation} where $\Delta(\R_{\alpha},\epsilon)$ denotes the $\epsilon$-neighborhood of $\R_{\alpha}$, and $c(J)$ is an absolute constant.
\end{dfn}


Equipped with the assumption of ubiquity, Beresnevich, Dickinson \& Velani \cite{BDV06} established a complete metric theory on the set of the form $$
\W(\psi):=\ \big\{x\in X: x\in \Delta\big(\R_{\alpha}, \psi(\beta_{\alpha})\big), \ {\text{for infinitely many}}\ \alpha\in J\big\},
$$ including Hausdorff measures as well as $\mu$-measure 
under
some mild and natural conditions. Instead of citing the full generality of the measure theory in \cite{BDV06}, we state the following special case (all the necessary notation will be explained in \S\ref{fw}).

\begin{thm}[Beresnevich, Dickinson \& Velani \cite{BDV06}]\label{t3}
Assume the $\delta$-Ahlfors regularity for a measure $\mu$ on $X$, the $\kappa$-scaling property for every resonant set $\R_{\alpha}$,   $\alpha\in J$, and the ubiquity for balls with respect the function $\rho$ and the sequences $\{\ell_n, u_n: n\ge 1\}$. Assume that $\psi$ is decreasing and that either $\psi$ or $\rho$ is $\lambda$-regular for some $0 < \lambda < 1$. Then $$
\mu\big(\W(\psi)\big)=\mu(X)  \ \ \ {\text{if}}\ \ \ \ \sum_{n\ge 1}\left(\frac{\psi(u_n)}{\rho(u_n)}\right)^{\delta(1-\kappa)}=\infty.
$$
\end{thm}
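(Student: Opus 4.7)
The plan is to establish the theorem by first reducing to a local statement — namely that for every ball $B \subset X$ one has $\mu(\W(\psi) \cap B) \geq c \cdot \mu(B)$ for some absolute constant $c>0$ — and then to deduce $\mu(\W(\psi)) = \mu(X)$ using the Lebesgue density theorem, which is available because $\mu$ is $\delta$-Ahlfors regular (and $\W(\psi)$ is a Borel $\limsup$ set, so in particular measurable). The proof proper will consist of building, inside an arbitrary ball $B$, a family of ``good'' sets whose $\limsup$ is contained in $\W(\psi)$ and has $\mu$-measure comparable to $\mu(B)$.

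The first substantive step is to exploit ubiquity for balls to select, for each sufficiently large $n$, a large but well-spaced family of resonant sets with weight in $[\ell_n, u_n]$. Concretely, starting from the set
\begin{equation*}
U_n \;:=\; B \cap \bigcup_{\alpha \in J:\,\ell_n \le \beta_\alpha \le u_n} \Delta\big(\R_\alpha, \rho(u_n)\big),
\end{equation*}
which satisfies $\mu(U_n) \geq c(J)\,\mu(B)$ by \eqref{fff7}, I would extract via a Vitali-type covering argument a maximal subfamily $\R_{\alpha_{n,1}},\dots,\R_{\alpha_{n,m_n}}$ whose $\rho(u_n)$-neighborhoods are pairwise disjoint but whose suitably enlarged neighborhoods still cover $U_n$. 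The $\kappa$-scaling property together with Ahlfors regularity then forces
$m_n \gtrsim \mu(B)\,\rho(u_n)^{-\delta(1-\kappa)}$ (up to an $r(B)^{\delta}$ factor absorbed into the constant). Passing to the smaller $\psi(u_n)$-neighborhoods and defining
\begin{equation*}
F_n \;:=\; \bigcup_{i=1}^{m_n} \Delta\big(\R_{\alpha_{n,i}}, \psi(u_n)\big) \cap B,
\end{equation*}
the same scaling property yields $\mu(F_n) \gtrsim \bigl(\psi(u_n)/\rho(u_n)\bigr)^{\delta(1-\kappa)} \mu(B)$. Since $\psi$ is decreasing and $\beta_{\alpha_{n,i}} \le u_n$, each $F_n$ is a subset of $\bigcup_{\beta_\alpha \ge \ell_n}\Delta(\R_\alpha,\psi(\beta_\alpha))$, so $\limsup_n F_n \subset \W(\psi)$.

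With the measure lower bound on $F_n$ in hand, the divergence hypothesis $\sum_n (\psi(u_n)/\rho(u_n))^{\delta(1-\kappa)} = \infty$ gives $\sum_n \mu(F_n) = \infty$. I would then invoke a divergence Borel--Cantelli-type lemma (the Chung--Erd\H{o}s/``quasi-independence on average'' version used throughout \cite{BDV06}): if
\begin{equation*}
\sum_{n,k \le N}\mu(F_n \cap F_k) \;\le\; C\,\Big(\sum_{n \le N}\mu(F_n)\Big)^2
\end{equation*}
holds for all large $N$, then $\mu(\limsup_n F_n) \geq \mu(B)/C$, completing the local step. The role of the $\lambda$-regularity of $\psi$ or $\rho$ is to allow one to thin out $\{u_n\}$ to a geometric-type subsequence, so that the corresponding contributions from $F_n$ and $F_k$ with $n \ne k$ are of comparable size and pairwise overlaps can be handled in a uniform way.

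The principal obstacle will be establishing the quasi-independence estimate on the sums of pairwise intersections $\mu(F_n \cap F_k)$. Because the resonant sets $\R_\alpha$ may have positive codimension and their $\psi$- and $\rho$-neighborhoods at different scales $n,k$ can meet in geometrically awkward configurations, bounding $\mu(\Delta(\R_{\alpha_{n,i}},\psi(u_n)) \cap \Delta(\R_{\alpha_{k,j}},\psi(u_k)))$ requires a careful case split according to whether $\rho(u_n) \lessgtr \rho(u_k)$, together with the $\kappa$-scaling property applied at the correct scale and the $\lambda$-regularity to control ratios of $\rho$-values. Once this covering-and-overlap bookkeeping is executed, the preceding framework delivers the conclusion.
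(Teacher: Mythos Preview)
The paper does not prove Theorem~\ref{t3} separately---it is quoted from \cite{BDV06}---but it does prove the generalization Theorem~\ref{t1}, whose proof in \S\ref{pf} specializes to this case when $d=1$. Your overall architecture (localize to a ball $B$, build sets $F_n\subset\W(\psi)$ via ubiquity, verify $\sum\mu(F_n)=\infty$, establish quasi-independence, apply Chung--Erd\H{o}s, then globalize via Lemma~\ref{l2.2}) matches the paper's strategy exactly.

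There is, however, a structural difference in the covering step that is worth flagging. You propose to select resonant sets $\R_{\alpha_{n,i}}$ whose $\rho(u_n)$-neighborhoods are pairwise disjoint, and then set $F_n=\bigcup_i\Delta(\R_{\alpha_{n,i}},\psi(u_n))\cap B$. When $\kappa>0$ the sets $\Delta(\R_\alpha,\rho)$ are not balls, so the Vitali/$5r$-lemma does not apply to them directly, and your count ``$m_n\gtrsim\mu(B)\rho(u_n)^{-\delta(1-\kappa)}$'' is not quite the right object (indeed the parenthetical about absorbing an $r(B)^\delta$ factor signals trouble). The paper sidesteps this by covering $\tfrac12 B\cap\bigcup_{\alpha\in J_n}\Delta(\R_\alpha,\rho(u_n))$ with \emph{balls} $B(x,\rho(u_n))$ centered at points $x\in\R_\alpha$, to which the $5r$-lemma applies verbatim; this yields $\asymp(r_B/\rho(u_n))^{\delta}$ disjoint ``big'' balls. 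Inside each big ball it then covers the intersection with $\Delta(\R_\alpha,\psi(u_n))$ by $\asymp(\rho(u_n)/\psi(u_n))^{\delta\kappa}$ disjoint ``small'' balls of radius $\psi(u_n)$, and $E_n$ is the union of all small balls. This two-scale ball construction makes both the measure estimate $\mu(E_n)\asymp\mu(B)(\psi(u_n)/\rho(u_n))^{\delta(1-\kappa)}$ and the overlap analysis transparent: for $m<n$ one counts how many big balls of level $n$ meet a fixed small ball of level $m$, then how many small balls of level $n$ sit in each, using the $\kappa$-scaling property at each stage. A further minor difference: the paper does not thin $\{u_n\}$ to a subsequence; $\lambda$-regularity is used directly to produce a factor $\lambda^{(n-m)\epsilon}$ in the cross-term bound $\mu(E_m\cap E_n)$, which is then summed geometrically.
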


Among many of applications of Theorem \ref{t3} in \cite{BDV06}, we cite one application to the classical Diophantine approximation, i.e.\ the Khinthchine-Groshev theorem. \begin{cor}\cite{BDV06}
Let $\psi:\N\to \mathbb{R}_+$ be a non-increasing function. Then the set $$
\Big\{A\in [0,1]^{dh}: \|A_i\q\|<\psi(|\q|), \ \forall \ 1\le i\le d, \ {\text{for infinitely many}}\ \q\in \mathbb{Z}^h\Big\}
$$ is of Lebesgue measure zero or one according to $$
\sum_{q=1}^{\infty}q^{h-1}\psi(q)^{d}<\infty\ {\text{or}}\ =\infty.
$$
\end{cor}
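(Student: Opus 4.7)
The plan is to realize the given set as $\W(\tilde\psi)$ for a ubiquitous system on $X=[0,1]^{dh}$ with Lebesgue measure $\mu$ (trivially $dh$-Ahlfors regular, so $\delta=dh$), and then invoke Theorem \ref{t3}. I would index resonant sets by pairs $\alpha=(\q,\p)\in(\Z^h\nz)\times\Z^d$, take weight $\beta_\alpha:=|\q|$, and put
$$
\R_\alpha:=\{A\in X: A_i\q=p_i\text{ for all }1\le i\le d\},
$$
an affine subspace of codimension $d$ (dimension $d(h-1)$) intersected with the unit cube. A routine computation gives the $\kappa$-scaling property with $\kappa=(h-1)/h$, so that $\delta(1-\kappa)=d$. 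Under the product decomposition $A=(A_1,\dots,A_d)\in(\mathbb{R}^h)^d$, the Euclidean distance from $A$ to $\R_\alpha$ is comparable to $\max_i|A_i\q-p_i|/|\q|$, hence the condition $\|A_i\q\|<\psi(|\q|)$ for all $i$ is equivalent, up to choice of $\p\in\Z^d$, to $A\in\Delta(\R_{\q,\p},\tilde\psi(|\q|))$, where $\tilde\psi(\beta):=\psi(\beta)/\beta$.

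Next I would verify ubiquity with $u_n=2^n$, $\ell_n=2^{n-1}$, and $\rho(u):=u^{-1-h/d}$. Simultaneous Dirichlet (Minkowski) gives, for every $A\in X$ and every $n$, some $(\q,\p)$ with $0<|\q|\le u_n$ and $|A_i\q-p_i|\le u_n^{-h/d}$ for all $i$, placing $A$ in $\Delta(\R_{\q,\p},\rho(u_n))$ provided $|\q|\ge\ell_n$. The exceptional set $E_n$ where every such witness has $|\q|<\ell_n$ admits the crude bound
$$
\mu(E_n)\le \sum_{0<|\q|<\ell_n}|\q|^{d}\cdot\bigl(u_n^{-h/d}/|\q|\bigr)^d \ll (\ell_n/u_n)^h = 2^{-h},
$$
so the union in \eqref{fff7} has measure $\ge 1-O(2^{-h})$; a parallel localized count gives \eqref{fff7} in any ball $B$ for $n\ge n_o(B)$, with an absolute $c(J)>0$.

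With the decreasing $\psi$ and $\rho$ (which is $\lambda$-regular with $\lambda=2^{-1-h/d}<1$), Theorem \ref{t3} is applicable. Its series becomes
$$
\sum_{n\ge 1}\left(\frac{\tilde\psi(u_n)}{\rho(u_n)}\right)^{\delta(1-\kappa)}
\;\asymp\; \sum_{n\ge 1}\bigl(\psi(2^n)\,2^{nh/d}\bigr)^d \;=\; \sum_{n\ge 1}2^{nh}\psi(2^n)^d,
$$
which by Cauchy condensation (monotonicity of $\psi$) diverges iff $\sum_q q^{h-1}\psi(q)^d=\infty$. This handles the divergence case. The convergence case is routine Borel--Cantelli, using $\mu(\Delta(\R_{\q,\p},\tilde\psi(|\q|)))\ll(\psi(|\q|)/|\q|)^d$ and the $O(|\q|^d)$ admissible $\p$'s for each $\q$. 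I expect the main technical obstacle to be the local ubiquity step: one must confirm that the set $E_n$ is not concentrated in any fixed ball, which rests on equidistribution of the Dirichlet approximants $\q$ across $X$.
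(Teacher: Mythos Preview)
Your approach is correct and is precisely the derivation the paper intends: the corollary is stated (without proof) as an application of Theorem~\ref{t3}, and the resonant sets $\R_{\q,\p}$, scaling exponent $\kappa=(h-1)/h$, ubiquity function $\rho(u)\asymp u^{-1-h/d}$, and the Minkowski-plus-exceptional-set argument you outline are exactly the ingredients of \cite{BDV06} (and, in the more general linear-forms setting, of the paper's own Lemma~\ref{l5.3}, which carries out the localized count you flag as the main obstacle). One cosmetic point: with $u_n/\ell_n=2$ the implicit constant in your bound $\mu(E_n)\ll 2^{-h}$ can exceed $1$, so take $u_n=M^n$, $\ell_n=M^{n-1}$ with $M$ sufficiently large (the paper uses $M\ge 2^{2d+3}$).
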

\noindent Here $A_i$ are the rows of $A$, $|\q|=\max_{1\le k\le h}|q_k|$, and $\|x\|$ stands for the distance of a real number $x$ from integers.
\smallskip

These general principles have become fundamental 
 in metric number theory.
 Besides the wide usage in classical Diophantine approximation (for example \cite{BBD02,BerD,Bug,Bug03,DickV,Dod92,Dodson,Lev,Ryn92}), they are also used in   Diophantine approximation of $p$-adic fields and formal power fields \cite{Dick99, Kris}, as well as Diophantine approximation on manifolds (for example \cite{BadBV,Ber12,B2,B1}).

It should be noted that all these principles are designed to attack the metric theory of limsup sets defined by balls, that is, starting from
 Dirichlet's theorem in simultaneous \da.
However, more generally  one can 
take Minkowski's theorem,  see \cite{Mi,Sch}, as a starting point.
Here is a special case:

{\em  For any $x=(x_1,\dots,x_d)\in \mathbb{R}^d$, any non-negative vector $(\hat{a}_1,\dots,\hat{a}_d)$ with $\hat{a}_1+\cdots+\hat{a}_d=1$, and any $Q\in \N$, there exists an integer $1\le q\le Q$ such that $$
\|qx_i\|<Q^{-\hat{a}_i}, \ 1\le i\le d.
$$
Consequently, by letting $a_i=1+\hat{a}_i$, $1\le i\le d$, there exist infinitely many integers $p_1,\dots, p_d,q$ such that \begin{equation*}\label{c17}
|x_i-p_i/q|<q^{-a_i}, \ 1\le i\le d.
\end{equation*}
This means that all vectors will fall into a sequence of rectangles centered at rational vectors infinitely often.}

Minkowski's theorem provides a more profound understanding of the distribution of rational vectors,
which works sufficiently well in high dimensional Diophantine approximation compared with Dirichlet's theorem
(for example, Minkowski's theorem intervenes as an essential tool in  Diophantine approximation on manifolds,
see e.g.\ \cite{B1, B2}).
So it should be valuable to study the improvement based on Minkowski's theorem, or more precisely, to consider the metric theory of limsup sets defined
by rectangles. However, besides some specific examples found in the work  of Sprindzuk \cite{Sprindzuk}, Schmidt \cite{Sch1}, Gallagher \cite{Gall}
and Hussain \& Yusupova \cite{HusY} on linear forms, 
no general theory has been put forth. So the study on limsup sets defined by rectangles lags much behind the study on limsup sets defined by balls. In this paper  we hope to push this forward by presenting a general principle for the measure theory of limsup sets defined by rectangles.

\medskip

\noindent{\bf Acknowledgments.}
The first-named author is  grateful to Victor Beresnevich 
and Mumtaz Hussain for useful discussions.


\section{The Framework and Main Results}\label{fr}
In this section, we describe our general framework 
and state the main result of the paper. In fact, one of the major tasks is to find 
suitable assumptions which could possibly catch the nature for the metric theory of limsup sets defined by rectangles.

\subsection{The Framework}\label{fw} Throughout, fix an integer $d\ge 1$. 
Let $(X_i, {\text{dist}}_i, \mu_i)$ be a bounded locally compact metric
space with $\mu_i$ a Borel probability measure and $\text{dist}_i$ a metric on $X_i$ for each $1\le i\le d$.
We consider the product space $(X, {\text{dist}}, \mu)$, where $$
X=\prod_{i=1}^dX_i; \   
\mu=\prod_{i=1}^d \mu_i; \ \ 
{\text{dist}}(x,y)=\max_{1\le i\le d}{\text{dist}}_i(x_i, y_i).
$$
A ball $B(x,r)$ in  $X$ is 
the product of balls in $\{X_i\}_{1\le i\le d}$, i.e. $$
B(x,r)=\prod_{i=1}^d B(x_i,r), \ {\text{for}}\ x=(x_1,\dots, x_d).
$$
\begin{itemize}
\item Let $J$ be an infinite countable index set and let $\beta: J\to \mathbb{R}_+$ be a positive function such that for any $M>1$, $\{\alpha\in J: |\beta(\alpha)|<M\}$ is finite;

\item Let $\{\ell_n, u_n: n\ge 1\}$ be two sequences of integers such that $u_{n}\ge {\ell}_{n}\to \infty$  as $n\to \infty$, and
define $$
J_n=\{\alpha\in J: \ell_{n}\le\beta(\alpha)\le u_{n}\}.
$$

\item Let $\rho=(\rho_1,\dots, \rho_d)$ be a $d$-tuple of functions with $\rho_i:\mathbb{R}_+\to \mathbb{R}_+$ and 
$\rho_i(u)\to 0$ as $u\to \infty$ for each $1\le i\le d$.
\end{itemize}

For each $1\le i\le d$, let $\{\R_{\alpha, i}: \alpha\in J\}$ be a sequence of subsets of $X_i$. The resonant sets in $X$ we are considering are $$
\Big\{\R_{\alpha}=\prod_{i=1}^d\R_{\alpha, i}, \ \ \alpha\in J\Big\}.
$$ 
For any $\bold{r}=(r_1,\dots, r_d)$, denote a rectangle-like set as $$\Delta(\R_{\alpha}, \bold{r})=\prod_{i=1}^d\Delta(\R_{\alpha,i}, r_i),
$$ where $\Delta(\R_{\alpha,i}, r_i)$ is the $ r_i$-neighborhood of $\R_{\alpha,i}$ in $X_i$.

Let $\Psi=(\psi_1,\dots, \psi_d)$ be  a $d$-tuple of positive functions defined on $\mathbb{R}_+$. The set we would like to describe is: $$
\W(\Psi)=\Big\{x\in X: x\in \prod_{i=1}^d\Delta\Big(\R_{\alpha,i}, \psi_i\big(\beta(\alpha)\big)\Big), \ {\text{i.m.}}\ \alpha\in J\Big\},
$$ i.e.\ the set of points which fall into the `{\em rectangle}' $
\prod_{i=1}^d\Delta(\R_{\alpha,i}, \psi_i\big(\beta_{\alpha})\big)$  for infinitely many $\alpha\in J$.

\smallskip

Next we impose some regularity assumptions on the measures $\mu_i$ and the resonant sets $\R_{\alpha}$.
\begin{dfn}[$\delta$-Alhfors regularity] Let $\Omega$ be a complete metric space and $\nu$ be a Borel probability measure. Call $\nu$ 
Ahlfors regular with exponent $\delta$ if there exist constants $c>0$ and $r_0>0$ such that for any $x\in \Omega$ and $r<r_0$,
  $$c^{-1}r^{\delta}\le \nu\big(B(x,r)\big)\le c r^{\delta}.$$
\end{dfn}

In the following, we will assume the measure $\mu_i$ to be $\delta_i$-Ahlfors regular for each $1\le i\le d$. We will also require the resonant sets to have a special form generalizing the Euclidean space set-up where the resonant sets are points or, more generally,  affine subspaces.
\begin{dfn}[$\kappa$-scaling property] Let $0\le \kappa<1$. For each $1\le i\le d$, say that $\{\R_{\alpha, i}\}_{\alpha\in J}$ has a
$\kappa$-scaling property if for any $\alpha\in J$ and any ball $B(x_i,r)$ in $X_i$ with center $x_i\in \R_{\alpha, i}$ and $0<\epsilon<r$,
one has \begin{equation*}
c^{-1}\cdot r^{\delta_i\kappa}\cdot \epsilon^{\delta_i (1-\kappa)}\le \mu_i\big(B(x_i,r)\cap \Delta(\R_{\alpha, i}, \epsilon)\big)\le c \cdot r^{\delta_i\kappa}\cdot\epsilon^{\delta_i (1-\kappa)}
\end{equation*} for some absolute constant $c>0$.\end{dfn}
We list some examples for which the $\kappa$-scaling property holds.\begin{enumerate}
\item  For each $\alpha\in J$, the $i$th coordinate $\R_{\alpha, i}$ is a point in $X_i$, so $\kappa=0$.\smallskip

\item  Let $X_i=\mathbb{R}^n$. For each $\alpha\in J$, the $i$th coordinate $\R_{\alpha, i}$ is an $l$-dimensional affine subspace in $X_i$, so $\delta_i=n$ and $\kappa=l/n$.\smallskip

\item Let $X_i=\mathbb{R}^n$, and  for all $\alpha\in J$ let $\R_{\alpha, i}$ be an $l$-dimensional smooth compact manifold embedded in $X_i$. Then $\delta_i=n$ and $\kappa=l/n$.\smallskip

\item  Let $X_i=\mathbb{R}^n$ and  for all $\alpha\in J$ let $\R_{\alpha, i}$ be a self-similar set   of Hausdorff dimension $l$ satisfying the open set condition. Then $\delta_i=n$ and $\kappa=l/n$.
\end{enumerate}
For a proof of the scaling property in the last two examples, one is referred to Allen \& Baker \cite{AllBa}.
\begin{dfn}[$\lambda$-regularity]
  Let $0<\lambda<1$. A function $\varphi$ is said to be $\lambda$-regular with respect to the sequence $\{u_n\}_{n\ge 1}$ if $$\varphi(u_{n+1})\le \lambda\cdot \varphi(u_n)  \ \ {\text{for all}}\ \ n\gg 1.$$
\end{dfn}

\subsection{Ubiquitous systems for rectangles}

The ubiquity condition for balls (\ref{fff7}) 
is mainly rooted in Dirichlet's theorem in Diophantine approximation \cite{BDV06}. 
Thus, as far as the metric theory
of limsup sets defined by rectangles is concerned, it is reasonable to expect that Minkowski's theorem should intervene in some form.
The following notion of  {\em ``ubiquity for rectangles"} is designed to catch the nature of the rectangles inspired by Minkowski's theorem.
 It first appeared in 
 the previous  work of the second-named author with Xu and Wu \cite{WWX,WangW},
where the Hausdorff theory for limsup sets defined by rectangles was investigated.


\begin{dfn}[Ubiquity for rectangles]\label{dfn1} Call $(\{\R_{\alpha}\}_{\alpha\in J}, \beta)$ a ubiquitous system for rectangles with respect to the function $\rho=(\rho_1,\dots, \rho_d)$ and the sequences $\{\ell_n, u_n: n\ge 1\}$ if there exist a constant $c>0$ such that for any ball $B$ in $X$  \begin{equation}\label{ff5}
\mu\left(B\cap \bigcup_{\alpha\in J_n}\prod_{i=1}^d\Delta\big(\R_{\alpha,i}, \rho_i(u_n)\big)\right)\ge c\cdot \mu(B)  \ \ {\text{for all}}\ \ n\ge n_o(B).
\end{equation}\end{dfn}

Our main result is the following general principle for the measure theory of limsup sets defined by rectangles, which together with the Hausdorff theory developed
 in \cite{WangW} provides a rather complete metric theory for 
this set-up (under the ubiquity hypothesis). For ease of notation, for two $d$-tuple\new{s} of functions $\rho$ and $\Psi$  we write\begin{align*}
 \rho\ {\text{is $\lambda$ regular}}&\Longleftrightarrow\, \rho_i\ {\text{is $\lambda$ regular for all}} \ 1\le i\le d;\\
\Psi(u)\le \rho(u)\ &\Longleftrightarrow \,\psi_i(u)\le \rho_i(u)\ {\text{for all}} \ 1\le i\le d.
\end{align*}
\begin{thm}[Measure theory]\label{t1}
Under the setting given above, assume the $\delta_i$-Ahlfors regularity for $\mu_i$ with $1\le i\le d$, the $\kappa_i$-scaling property for every $\R_{\alpha, i}$ with $\alpha\in J$ and $1\le i\le d$, and the  ubiquity for rectangles. Assume that $\Psi$ is decreasing, that either $\Psi$ or $\rho$ is $\lambda$-regular, and that $\Psi(u_n)\le \rho(u_n)$ for all $n\gg 1$. Then $$
\mu\big(\W(\Psi)\big)=\mu(X) \ \ \ {\text{if}}\ \ \ \ \sum_{n\ge 1}\prod_{i=1}^d\left(\frac{\psi_i(u_n)}{\rho_i(u_n)}\right)^{\delta_i(1-\kappa_i)}=\infty.
$$
\end{thm}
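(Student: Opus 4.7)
The plan is to establish the uniform local lower bound $\mu(\W(\Psi) \cap B) \ge c_0\, \mu(B)$ for every ball $B \subset X$ of sufficiently small radius, with $c_0 > 0$ independent of $B$. Because each $\mu_i$, hence $\mu$, is Ahlfors regular, the Lebesgue density theorem upgrades such a uniform bound to $\mu(\W(\Psi)) = \mu(X)$: if the complement had positive measure, a density-one point of it would contradict the lower bound on small balls. So the task reduces to showing that a positive fraction of any given ball lies in $\W(\Psi)$.

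Fix such a ball $B$. I would build a Cantor-like compact subset $K \subset \W(\Psi) \cap B$ with $\mu(K) \ge c_0\, \mu(B)$ by induction. Set $K_0 := B$, and assume $K_k$ is a finite disjoint union of ``survivor'' rectangles, each contained in some $\Delta(\R_\alpha, \Psi(u_n))$ chosen at the previous stage. To define $K_{k+1}$, pick a block $\mathcal B_{k+1}$ of consecutive indices $n$ large enough that: (i) $\rho_i(u_n)$ is much smaller than every side-length of every survivor $R \in K_k$ in each coordinate~$i$ --- enforced using the $\lambda$-regularity of $\rho$, or of $\Psi$ together with $\Psi \le \rho$; and (ii) the block-sum
\[
\sum_{n \in \mathcal B_{k+1}} \prod_{i=1}^d \left(\frac{\psi_i(u_n)}{\rho_i(u_n)}\right)^{\delta_i(1-\kappa_i)}
\]
exceeds a threshold $T_k$ chosen so large that the pairwise step below yields a coverage fraction $1 - 2^{-k}$. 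Divergence of the series is precisely what permits this block selection. Inside each $R \in K_k$ and for each $n \in \mathcal B_{k+1}$, ubiquity \eqref{ff5} produces $\Delta(\R_\alpha, \rho(u_n))$-neighborhoods covering a $c$-fraction of $R$; a Vitali-type disjointification extracts a disjoint sub-family. Replacing each selected $\rho(u_n)$-rectangle by its concentric $\Psi(u_n)$-sub-rectangle and invoking the $\kappa_i$-scaling property factor-by-factor gives, with $W_n := \bigcup_{\alpha \in J_n} \Delta(\R_\alpha, \Psi(u_n))$, the single-level bound
\[
\mu\bigl(R \cap W_n\bigr) \;\gtrsim\; \mu(R) \prod_{i=1}^d \left(\frac{\psi_i(u_n)}{\rho_i(u_n)}\right)^{\delta_i(1-\kappa_i)}.
\]

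To convert the divergent block-sum into an honest covering fraction I would establish a pairwise quasi-independence estimate of the form
\[
\mu\bigl(R \cap W_m \cap W_n\bigr) \;\lesssim\; \frac{\mu(R \cap W_m)\, \mu(R \cap W_n)}{\mu(R)}
\]
for distinct $m, n \in \mathcal B_{k+1}$, proved factor-by-factor from the product Ahlfors structure, the $\kappa_i$-scaling hypothesis, and Fubini on $X = \prod_i X_i$. The Chung--Erd\H{o}s inequality then upgrades the divergent block-sum to $\mu\bigl(R \cap \bigcup_{n \in \mathcal B_{k+1}} W_n\bigr) \ge (1 - 2^{-k}) \mu(R)$. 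Defining $K_{k+1}$ to be a disjoint family of $\Psi(u_n)$-rectangles inside the covered region (over all $R \in K_k$ and $n \in \mathcal B_{k+1}$) yields $\mu(K_{k+1}) \ge (1 - 2^{-k}) \mu(K_k)$, whence $K := \bigcap_k K_k \subset \W(\Psi) \cap B$ is compact with $\mu(K) \ge \prod_k (1 - 2^{-k})\, \mu(B) > 0$.

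\textbf{The main obstacle} will be the pairwise quasi-independence bound. In the balls case treated in \cite{BDV06} it is supplied by elementary ball-in-ball geometry; here the rectangular neighborhoods $\Delta(\R_\alpha, \Psi(u_n))$ at different levels $n$ have genuinely different aspect ratios, and distinct resonant sets $\R_{\alpha, i}$ and $\R_{\alpha', i}$ inside a single coordinate can overlap along low-dimensional subsets in complicated ways. Securing a uniform constant in the Chung--Erd\H{o}s bound requires a careful coordinate-wise analysis that is genuinely sensitive to the product structure and to the standing assumption $\Psi \le \rho$; achieving this uniformly across all blocks $\mathcal B_{k+1}$ is the crux of the argument.
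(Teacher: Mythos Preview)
Your overall architecture---local positive-density estimate plus a density/doubling argument---matches the paper, and you correctly flag the correlation bound as the crux. But the quasi-independence bound you posit, $\mu(R\cap W_m\cap W_n)\lesssim \mu(R\cap W_m)\,\mu(R\cap W_n)/\mu(R)$, simply fails in general here, and the $\lambda$-regularity hypothesis is needed for a purpose different from the one you assign it. The paper's analysis (Step~3 of the proof) shows that the correlation $\mu(E_m\cap E_n)$ is governed by the partition $I_1=\{i:\psi_i(u_m)\ge\rho_i(u_n)\}$, $I_2=\{1,\dots,d\}\smallsetminus I_1$. Only when $I_2=\emptyset$---i.e.\ when the level-$m$ shrunk rectangle dominates the level-$n$ big rectangle in \emph{every} coordinate---does one obtain quasi-independence. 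When $I_2\ne\emptyset$ the best available bound is $\mu(E_m\cap E_n)\ll \lambda^{(n-m)\epsilon}\mu(E_n)$ (respectively $\lambda^{(n-m)\epsilon}\mu(E_m)$), coming precisely from the $\lambda$-regularity of $\rho$ (respectively $\Psi$). This geometric-decay term is not quasi-independence and must be summed separately in the Chung--Erd\H{o}s denominator. Your ``factor-by-factor via Fubini'' sketch cannot manufacture quasi-independence, because the coordinates are coupled through the common index $\alpha\in J$; and you invoke $\lambda$-regularity only to make big rectangles fit inside survivors (your condition~(i)), which misses its essential role in controlling the $I_2\ne\emptyset$ case.

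The paper's route is also simpler than yours: no Cantor construction, just a single application of Chung--Erd\H{o}s to carefully disjointified sets $E_n\subset B$ (big $\rho$-rectangles extracted via a $5r$-covering lemma for rectangles, then shrunk $\Psi$-rectangles inside them via $\kappa$-scaling), feeding both the quasi-independence contribution and the geometric-decay contribution into the denominator to get $\mu(\W(\Psi)\cap B)\gg\mu(B)$ directly. Your Cantor scheme carries two further wrinkles. First, ubiquity (\ref{ff5}) is assumed only for \emph{balls} in $X$, whereas your survivor sets are genuine rectangles with unequal side-lengths, so applying it inside $R$ requires an extra covering-by-balls step you do not describe. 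Second, Chung--Erd\H{o}s with a fixed quasi-independence constant yields a fixed positive fraction, not one approaching~$1$; the claim that taking the block-sum above a threshold $T_k$ forces coverage $1-2^{-k}$ is not correct without a further iteration argument.
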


The first application of our result is a solution to a simple shrinking target problem. Though it can be proved by the exponential mixing property of
the underlying dynamics, however with the help of Theorem \ref{t1}
one can see that the proof uses only very basic arithmetic features of the system. \begin{thm}\label{t2}
  Let $b_1,\dots,  b_d\ge 2$ be integers, and let $T_i(x)=b_i x\ (\bmod\ 1)$. Then for any $x_o\in [0,1]^d$ and a $d$-tuple $\Psi$ of positive functions
  $\psi_1,\dots, \psi_d:\mathbb{N}\to \mathbb{R}_+$, the Lebesgue measure of the set $$
  \mathfrak{S}(\Psi)=\Big\{x\in [0,1]^d: 
  |T_i^nx_i-x_{o,i}|<\psi_i(n)  \ \forall \, 1\le i\le d,
  \ {\text{for infinitely many}}\ n\in \N\Big\}
  $$
 is zero or one according to $$
  \sum_{n=1}^{\infty}\prod_{i=1}^d\psi_i(n)<\infty\ {\text{or}}\ =\infty.
  $$
\end{thm}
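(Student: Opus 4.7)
The convergence direction is a straightforward first Borel--Cantelli argument. For each $n$, the set $\{x \in [0,1]^d : |T_i^n x_i - x_{o,i}| < \psi_i(n) \text{ for all } i\}$ is a product of unions of intervals in each coordinate, and since $T_i$ preserves Lebesgue measure on $[0,1]$ its measure is bounded by $\prod_i \min(2\psi_i(n), 1)$. Summability of $\prod_i \psi_i(n)$ thus yields $\mu(\mathfrak{S}(\Psi)) = 0$ by Borel--Cantelli.

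For the divergence direction, I plan to fit the problem into the framework of Theorem~\ref{t1}. Take $X_i = [0,1]$ with Lebesgue measure (so $\delta_i = 1$), index set $J = \{(n, k_1, \dots, k_d) : n \geq 1,\ 0 \leq k_i < b_i^n\}$ with $\beta(\alpha) = n$ and $\ell_n = u_n = n$, and resonant sets $\R_{\alpha, i} = \{(k_i + x_{o,i})/b_i^n\}$---the single points solving $T_i^n x = x_{o,i}$, so $\kappa_i = 0$. Choose $\rho_i(n) = 1/(2 b_i^n)$; since the $b_i^n$ points $\R_{\alpha, i}$ are equally spaced at distance $1/b_i^n$ in $[0,1]$, their $\rho_i(n)$-neighborhoods tile $X_i$, and products of these tile $X$, which verifies \eqref{eq:ff5} with $c = 1$. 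The function $\rho$ is $\lambda$-regular with $\lambda = 1/\min_i b_i < 1$.

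The key translation is that $|T_i^n x_i - x_{o,i}| < \psi_i(n)$ is equivalent to $x_i \in B\bigl((k_i + x_{o,i})/b_i^n,\ \psi_i(n)/b_i^n\bigr)$ for some $k_i$. So setting $\tilde\psi_i(n) := \psi_i(n)/b_i^n$ identifies $\mathfrak{S}(\Psi)$ with the limsup set $\W(\tilde\Psi)$ of Theorem~\ref{t1}. Assuming $\psi_i \leq 1/2$ (achieved by truncation), one has $\tilde\psi_i \leq \rho_i$, and the divergence sum $\sum_n \prod_i (\tilde\psi_i(n)/\rho_i(n))^{\delta_i(1-\kappa_i)} = 2^d \sum_n \prod_i \psi_i(n)$ is $+\infty$. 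Theorem~\ref{t1} then delivers $\mu(\W(\tilde\Psi)) = 1$.

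The main obstacle is verifying the monotonicity hypothesis of Theorem~\ref{t1}: $\tilde\psi_i$ decreasing is equivalent to $\psi_i(n+1) \leq b_i \psi_i(n)$, which is not assumed. A naive decreasing minorant can destroy divergence of the series. I would circumvent this via a $0$--$1$ law: $\mathfrak{S}(\Psi)$ depends only on the tails of the base-$b_i$ digit expansions of $x_1, \dots, x_d$, which are independent and identically distributed under Lebesgue, so Kolmogorov's $0$--$1$ law gives $\mu(\mathfrak{S}(\Psi)) \in \{0, 1\}$. It then suffices to produce a decreasing $\hat\psi_i \leq \psi_i$ along some subsequence $(n_k)$ satisfying $\sum_k \prod_i \hat\psi_i(n_k) = \infty$ and the growth bound $\hat\psi_i(n_{k+1})/b_i^{n_{k+1}} \leq \hat\psi_i(n_k)/b_i^{n_k}$; applying Theorem~\ref{t1} to this subsystem gives positive measure, and the $0$--$1$ law upgrades it to full measure. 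The degenerate case where infinitely many $n$ satisfy $\psi_i(n) \geq 1/2$ simultaneously for all $i$ gives $\mathfrak{S}(\Psi) = X$ trivially and can be dispatched at the outset.
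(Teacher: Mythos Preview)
Your setup (resonant points $(k_i+x_{o,i})/b_i^n$, weight $\beta_\alpha=n$, $\ell_n=u_n=n$, $\rho_i(n)\asymp b_i^{-n}$, $\kappa_i=0$) and your verification of ubiquity by the trivial covering argument match the paper's proof exactly, as does the Borel--Cantelli treatment of the convergence case.

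Where you diverge is in handling the monotonicity hypothesis, and here the paper's resolution is both simpler and complete while yours has a gap. The paper observes (see the remark immediately after display~(\ref{1}) in the proof of Theorem~\ref{t1}) that monotonicity of $\Psi$ is invoked at exactly one point: the inclusion
\[
\bigcup_{\alpha\in J_n}\prod_i \Delta\bigl(\R_{\alpha,i},\psi_i(\beta_\alpha)\bigr)\ \supset\ \bigcup_{\alpha\in J_n}\prod_i \Delta\bigl(\R_{\alpha,i},\psi_i(u_n)\bigr),
\]
which uses $\psi_i(\beta_\alpha)\ge\psi_i(u_n)$ for $\beta_\alpha\le u_n$. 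In the present application $\ell_n=u_n=n$, so $\beta_\alpha=u_n$ for every $\alpha\in J_n$ and this inclusion is an \emph{equality} with no hypothesis on $\Psi$ at all. Thus the proof of Theorem~\ref{t1} goes through verbatim without monotonicity, and your $0$--$1$ law detour is unnecessary.

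Your proposed workaround is also incomplete as stated: you assert that it suffices to produce a subsequence $(n_k)$ and coordinate-wise minorants $\hat\psi_i\le\psi_i$ with $\hat\psi_i(n_k)/b_i^{n_k}$ decreasing and $\sum_k\prod_i\hat\psi_i(n_k)=\infty$, but you do not construct them. The natural candidate $\hat\psi_i(n)=b_i^n\min_{m\le n}\psi_i(m)/b_i^m$ can destroy divergence (take $d=1$, $b=2$, $\psi(n)=1/2$ except $\psi(2^j)=2^{-2^{j+1}}$: then $\sum\psi=\infty$ but $\sum\hat\psi<\infty$), and it is not evident how to repair this coordinate-wise. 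The Kolmogorov $0$--$1$ law step is correct, but the subsequence construction needs a genuine argument that you have not supplied.
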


We next apply our result to 
systems of linear forms mainly to illustrate the way for choosing the ubiquity function. Let $\varphi = \{\varphi_i\}_{1\le i\le d}$ be a $d$-tuple of non-increasing positive functions defined on $\mathbb{Z}_{\ge 0}$ with $$\varphi_i(q)\to 0, \ {\text{as}}\ q\to \infty,$$
 and let $\Phi = \{\Phi_k\}_{1\le k\le h}$ be  an $h$-tuple of non-decreasing positive integer-valued functions defined on $\N$ with
$$\Phi_k(u)\to \infty \ {\text{as}}\ u\to \infty.$$
Consider the following set:
\begin{align*}
 W(\varphi,\Phi):&= \bigg\{A\in [0,1]^{dh}: {\text{the system}} \left\{
                        \begin{array}{ll}
                          \|A_i\q\|<\varphi_i(u), 1\le i\le d, & \\
                          |q_k|\le\Phi_k(u), \ \ \, 1\le k\le h, &
                        \end{array}
                      \right. \\
                     & \qquad \qquad\qquad\qquad\qquad\qquad {\text{has a solution in $\q\in \Z^{h}\smallsetminus \{0\}$}}\ {\text{for infinitely many}}\ u\in \N
\bigg\}\\
&=\big\{A\in [0,1]^{dh}: \|A_i\q\|<\varphi_i\big(\max\{\Phi_1^{-1}(|q_1|^+),\dots, \Phi_h^{-1}(|q_h|^+)\}\big), \ 1\le i\le d, \ {\text{i.m.}}\ \q\in \Z^h\big\}.
\end{align*}
Here $|q|^+$ stands for $\max\{1, |q|\}$. Also in what follows we will denote Lebesgue measure on Euclidean spaces by $\mathcal{L}$.

\begin{thm}\label{schmidt}Assume that there exists $M>1$ such that for all $n\gg 1$, $$c_1\Phi_k(M^{n})\le \Phi_k(M^{n+1})\le c_2 \Phi_k(M^n), \ 1\le k\le h$$ for some absolutely constants $c_1, c_2>1$. Then
$\mathcal{L}\big(W(\varphi, \Phi)\big)$ is zero or one according to
$$
 \sum_{q=1}^{\infty}q^{-1}\cdot \prod_{i=1}^d\varphi_i(q)\cdot \prod_{k=1}^h\Phi_k(q)<\infty \ {\text{or}}\ =\infty.
$$
\end{thm}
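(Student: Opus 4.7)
\emph{Setup and reduction.} The strategy is to deduce Theorem~\ref{schmidt} from Theorem~\ref{t1} in the divergence half, and from the Borel--Cantelli lemma in the convergence half. I take $X = [0,1]^{dh} = \prod_{i=1}^d X_i$ with $X_i = [0,1]^h$ carrying Lebesgue measure and the sup metric (so $\delta_i = h$). For $\alpha = (\q,\p) \in (\Z^h\setminus\{0\})\times\Z^d$ the resonant set $\R_{\alpha,i} := \{A_i \in X_i : A_i\q = p_i\}$ is an affine hyperplane, giving $\kappa_i = (h-1)/h$ and $\delta_i(1-\kappa_i) = 1$. I define $\beta(\alpha) := \max_{k}\Phi_k^{-1}(|q_k|^+)$, pick $u_n = M^n$ and $\ell_n = M^{n-K}$ for a sufficiently large fixed $K$, and, writing $\tilde\Phi(u) := \sum_k\Phi_k(u)$, set
$$\psi_i(u) := \frac{\varphi_i(u)}{\tilde\Phi(u)}, \qquad \rho_i(u) := \frac{1}{\tilde\Phi(u)\,\bigl(\prod_k\Phi_k(u)\bigr)^{1/d}}.$$
Because $\operatorname{dist}_\infty(A_i,\R_{\alpha,i}) = |A_i\q - p_i|/\|\q\|_1$ and $\|\q\|_1 \le \tilde\Phi(\beta(\q))$ on the relevant box, the choice of $\psi_i$ forces $W(\Psi)\subseteq W(\varphi,\Phi)$; meanwhile the choice of $\rho_i$ gives $\prod_i(\psi_i/\rho_i) = \prod_i\varphi_i\,\prod_k\Phi_k$, so by Cauchy condensation under the assumed $M$-regularity of $\Phi_k$ the series in Theorem~\ref{t1} agrees with the target $\sum_q q^{-1}\prod\varphi_i\prod\Phi_k$.

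\emph{Convergence.} If the target series converges, then summing over $\q \in J_n$ and using that $\{A_i : \|A_i\q\| < \varphi_i(u_n)\}$ has $\mu_i$-measure $2\varphi_i(u_n)$, one sees that $\mu\bigl(\bigcup_{\alpha\in J_n}\prod_i\{A_i:\|A_i\q\|<\varphi_i(u_n)\}\bigr) \le C\prod_k\Phi_k(u_n)\prod_i\varphi_i(u_n)$. This is summable in $n$ by Cauchy condensation, so Borel--Cantelli gives $\mu(W(\varphi,\Phi))=0$.

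\emph{Divergence via ubiquity.} The Ahlfors regularity of $\mu_i$, the $\kappa_i$-scaling of affine hyperplanes in $X_i$, the $\lambda$-regularity of $\rho$ (with $\lambda = c_1^{-(1+h/d)}$ coming from the $M$-regularity of $\Phi_k$), and the bound $\Psi\le\rho$ (after a harmless truncation of $\varphi_i$ in the degenerate regime where it would fail) are routine. The main technical step --- and the principal obstacle --- is the verification of the ubiquity for rectangles at the scale $\rho$, which is a factor $\tilde\Phi$ smaller than what a single application of Minkowski's theorem delivers. My plan is a second moment argument: for any ball $B\subset X$ set
$$X(A) := \#\bigl\{\q : \ell_n\le\beta(\q)\le u_n,\ \|A_i\q\|<\rho_i(u_n)\|\q\|_1 \text{ for every } i\bigr\},$$
and observe that $A \in \bigcup_{\alpha\in J_n}\prod_i\Delta(\R_{\alpha,i},\rho_i(u_n))$ iff $X(A)\ge 1$. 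A direct computation using $\sum_{\q\in J_n}\|\q\|_1^d\asymp\tilde\Phi^d\prod_k\Phi_k$ gives $\mathbb E[X\mid A\in B]\asymp 1$ by the design of $\rho$, and orthogonality of the characters $A_i\mapsto e^{2\pi i A_i\q}$ for distinct $\q$ yields $\mathbb E[X^2\mid A\in B]\lesssim \mathbb E[X]^2 + \mathbb E[X]$. The Paley--Zygmund inequality then delivers $\mu(B\cap\{X\ge 1\})\ge c\,\mu(B)$ for a uniform $c>0$, i.e., ubiquity. The large-$K$ choice of $\ell_n$ is used to discard the $A$'s whose approximations come only from scales $\beta(\q)<\ell_n$. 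Applying Theorem~\ref{t1} then gives $\mu(W(\Psi)) = 1$, and the inclusion $W(\Psi)\subseteq W(\varphi,\Phi)$ completes the proof.
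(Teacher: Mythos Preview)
Your overall plan (deduce Theorem~\ref{schmidt} from Theorem~\ref{t1}) matches the paper, but the choice of ubiquitous function is wrong and this is a genuine gap, not a cosmetic one. You take $\rho_i(u)=\bigl(\tilde\Phi(u)\,(\prod_k\Phi_k(u))^{1/d}\bigr)^{-1}$, which is the \emph{same} for every $i$; then $\psi_i\le\rho_i$ becomes $\varphi_i(u)\le(\prod_k\Phi_k(u))^{-1/d}$ for each $i$ separately. This can fail even when the target series diverges: e.g.\ $d=2$, $h=1$, $\Phi_1(u)=u$, $\varphi_1(u)=u^{-1/4}$, $\varphi_2(u)=u^{-3/4}$ gives $\sum q^{-1}\varphi_1\varphi_2\,\Phi_1=\sum q^{-1}=\infty$, yet $\varphi_1(u)>u^{-1/2}=(\Phi_1)^{-1/2}$. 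Your ``harmless truncation'' replaces $\varphi_1$ by $u^{-1/2}$, after which $\prod_i(\psi_i/\rho_i)\asymp u^{-1/4}$ and the series in Theorem~\ref{t1} \emph{converges}; so nothing is obtained. Enlarging $\rho_i$ to $\max(\rho_i,\psi_i)$ has the same effect on the ratio and fails for the same reason.

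The paper's remedy is to let $\rho_i$ carry the anisotropy of $\varphi_i$: it takes
\[
\rho_i(u)=\frac{M\,\varphi_i(u)}{\max_k\Phi_k(u)}\Bigl(\prod_{j}\varphi_j(u)\prod_k\Phi_k(u)\Bigr)^{-1/d},
\]
so that $\psi_i/\rho_i$ is \emph{independent of $i$} and $\psi_i\le\rho_i$ for every $i$ reduces to the single inequality $\prod_j\varphi_j\prod_k\Phi_k\le1$, which is WLOG by Minkowski. Ubiquity at this anisotropic scale is then proved not by a second moment, but by applying Minkowski's theorem with the \emph{anisotropic} target box $|A_i\q-p_i|<\tilde\rho_i(u):=\varphi_i(u)(\prod_j\varphi_j\prod_k\Phi_k)^{-1/d}$ (the volumes still multiply to $1$), and then showing by a first-moment count that the $A$'s whose Minkowski solution has some small $|q_k|$ occupy at most half of any ball $B$. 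Your Paley--Zygmund sketch, besides being unnecessary, also glosses over the correlated pairs $\q,\q'$ lying on a common line through the origin; ``orthogonality of characters for distinct $\q$'' does not by itself bound those terms. Finally, note that with the paper's $\rho_i$ the condition $\rho_i(u)\to0$ is no longer automatic, and an additional reduction (their Lemma~\ref{l3.1}, modifying $\varphi$ on a sparse set without changing the measure of $W$) is needed before Theorem~\ref{t1} can be invoked.
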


 It should be mentioned that Sprind\v zuk \cite{Sprindzuk} established a metric result for systems of linear forms which goes beyond the 
 set-up involving rectangles. Though in Sprind\v zuk's result  only primitive vectors $\q \in \Z^h$ are involved, Theorem \ref{schmidt} can be obtained from %
 \cite[Chapter 1, Theorem 13]{Sprindzuk} 
 with the help of an elementary calculation.

It is instructive to state the special case $h=1$ of the above theorem. Then one can take $\Phi_1(q) = q$ and thus study the set
$${W}(\varphi):=\big\{x\in [0,1]^d: \|qx_i\|<\varphi_i(q), \ \forall\ 1\le i\le d, \ \ {\text{i.m.}}\ q\in \mathbb{N}\big\}.
$$
Theorem \ref{schmidt} immediately implies
\begin{cor} \label{corsim}
 Let $\varphi = \{\varphi_i\}_{1\le i\le d}$ be as 
 above; then
 $\mathcal{L}\big(W(\varphi)\big)$
 is zero or one according to$$
\sum_{q=1}^{\infty}\prod_{i=1}^d\varphi_i(q)<\infty\ {\text{or}}\ =\infty.
$$
\end{cor}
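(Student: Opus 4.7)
The plan is to deduce Corollary~\ref{corsim} as the specialization of Theorem~\ref{schmidt} to $h = 1$ with $\Phi_1(q) = q$. The argument is essentially a bookkeeping exercise, and the statement of the corollary already announces that ``Theorem~\ref{schmidt} immediately implies'' the result.

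First I would verify the growth hypothesis on $\Phi_1$: for any $M > 1$, say $M = 2$, one has $\Phi_1(M^{n+1}) = M \cdot \Phi_1(M^n)$, so the required double inequality $c_1 \Phi_1(M^n) \le \Phi_1(M^{n+1}) \le c_2 \Phi_1(M^n)$ holds with any $1 < c_1 < M < c_2$. Under this choice of $\Phi_1$, the series in Theorem~\ref{schmidt}
\[\sum_{q=1}^{\infty} q^{-1} \cdot \prod_{i=1}^d \varphi_i(q) \cdot \Phi_1(q)\]
collapses to $\sum_{q=1}^{\infty} \prod_{i=1}^d \varphi_i(q)$, which is precisely the series appearing in the corollary.

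Next I would identify the two limsup sets. With $h = 1$ we have $\Phi_1^{-1}(|q|^+) = |q|^+$, so the defining condition of $W(\varphi,\Phi)$ reads $\|q x_i\| < \varphi_i(|q|^+)$ for all $1 \le i \le d$ and for infinitely many $q \in \mathbb{Z}\setminus\{0\}$. Since $\|(-q)x_i\| = \|q x_i\|$ and $\varphi_i(|-q|^+) = \varphi_i(|q|^+)$, one may restrict to positive $q$, so $W(\varphi,\Phi)$ coincides with $W(\varphi)$. Theorem~\ref{schmidt} then yields the zero/one dichotomy. There is no substantive obstacle; the content of the corollary is already contained in the theorem, and the only step that requires any care at all is the minor observation about $q \leftrightarrow -q$ symmetry when passing from $\mathbb{Z}\setminus\{0\}$ to $\mathbb{N}$.
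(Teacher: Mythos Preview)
Your derivation is correct and matches exactly the route the paper announces: the text preceding the corollary says that taking $h=1$ and $\Phi_1(q)=q$ in Theorem~\ref{schmidt} yields the result immediately, and your verification of the growth hypothesis on $\Phi_1$, the collapse of the series, and the identification $W(\varphi,\Phi)=W(\varphi)$ via the $q\leftrightarrow -q$ symmetry are all sound.

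It is worth noting, however, that the paper also supplies in \S\ref{sim} a second, self-contained proof of Corollary~\ref{corsim} that bypasses Theorem~\ref{schmidt} entirely and applies Theorem~\ref{t1} directly. There one sets up the resonant points $\R_\alpha=(p_1/q,\dots,p_d/q)$, chooses the ubiquitous function $\rho_i(q)=\frac{\varphi_i(q)}{q}\big(q\prod_j\varphi_j(q)\big)^{-1/d}$, and verifies ubiquity for rectangles (Lemma~\ref{l4.2}) and $\lambda$-regularity by hand. That proof is presented as a warm-up illustrating how to select the ubiquitous function before the more involved linear-forms case. Your approach is shorter and logically sufficient; the paper's direct argument has the advantage of being independent of the heavier machinery in \S\ref{lin} and of exhibiting concretely how the general framework specializes.
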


\begin{rem} \rm The
necessity of the ubiquity assumption in Theorem \ref{t1} can be justified by alluding to a result of   Boshernitzan \& Chaika   \cite{BoC} about Borel-Cantelli sequences. 
According to  \cite[Theorem 2]{BoC},
 if $\{x_n: n\in\N\} \subset [0,1]$ is a sequence such that for some ball $B\subset [0,1]$ and for any $\epsilon>0$  there exists $N_{\epsilon}$ such that \eq{bc}{
\mathcal{L}\Big(B\cap \bigcup_{n\le N_{\epsilon}}B(x_n, N_{\epsilon}^{-1})\Big)\le \epsilon\cdot \mathcal{L}(B),}
 then there exists a non-increasing positive function $\psi$ such that $$
\sum_{n=1}^{\infty}\psi(n)=\infty, \ {\text{but}}\ \ \mathcal{L}\big(\big\{x\in B: |x-x_n|<\psi(n), \ {\text{i.m.}}\ n\in \N\big\}\big)=0.
$$
Though 
the negation of \equ{bc} is slightly weaker than the  regularity or ubiquity of the corresponding system, at least to some extent  it verifies the necessity of the ubiquity assumption in our main result. \end{rem}

\medskip

\noindent{\bf The Organization of the Paper.} 
We prove  Theorem \ref{t1}  in \S\ref{pf}, and   in the next three sections discuss its applications.  In \S\ref{st} we establish Theorem \ref{t2} and in fact prove a more general statement, where $[0,1]^d$ is replaced by the product of Cantor sets defined by digit restrictions. Theorem \ref{schmidt} is proved in \S\ref{lin}, and before that in \S\ref{sim}
we present a streamlined proof of Corollary \ref{corsim}.
\medskip

At the end of this section, we fix some notation. \begin{itemize}
  \item $\R_{\alpha}$: a resonant set.
\item $\widetilde{R}$: big rectangles; $R$: small rectangles.
\item $5B$ or $5R$: a ball or rectangle $B$ scaled by a factor of $5$, that is, the ball/rectangle with the same center as $B$ but with  radius or side lengths multiplied by $5$.
\item $c, c_i$: absolute constants;
\item $a\ll b$: if $a\le c b$ for an unspecified constant $c>0$; $a\asymp b$:  $a\ll b$ and $b\ll a$;
\item $r_B$: the radius of a ball $B$.
\end{itemize}

\section{Proof of the Main Result}\label{pf}
To 
estimate the measure of a limsup set from below, the following Chung-Erd\"{o}s inequality \cite{Chung} is widely used.
\begin{lem}[Chung-Erd\"{o}s inequality \cite{Chung}, see also \cite{KoS}]\label{L:PZI} Let $(\Omega, \mathcal{B}, \nu)$ be a a finite measure space, and let $\{E_n\}_{n\ge 1}$ be a sequence of measurable sets.  If $\sum_{n\ge 1}\nu(E_n)=\infty$, then $$
\nu\left(\limsup_{n\to \infty}E_n\right)\ge \limsup_{n\to \infty}\frac{ \left(\sum_{1\le n\le N}\nu(E_n)\right)^2}{\sum_{1\le i\ne j\le N}\nu(E_i\cap E_j)}.
$$
\end{lem}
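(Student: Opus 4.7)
The plan is to prove the Chung--Erd\H{o}s inequality via the second-moment (Paley--Zygmund) method applied to partial sums of indicator functions, and then to pass from finite unions to the $\limsup$ set by a truncation argument.

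For fixed $M\le N$, introduce the counting function $S_{M,N}=\sum_{n=M}^{N}\mathbbm{1}_{E_n}$, whose support is $\bigcup_{n=M}^{N}E_n$. Applying the Cauchy--Schwarz inequality to $S_{M,N}$ and $\mathbbm{1}_{\{S_{M,N}>0\}}$ in $L^2(\nu)$ gives
$$
\Bigl(\int S_{M,N}\,d\nu\Bigr)^2=\Bigl(\int S_{M,N}\,\mathbbm{1}_{\{S_{M,N}>0\}}\,d\nu\Bigr)^2\le\nu\Bigl(\bigcup_{n=M}^{N}E_n\Bigr)\cdot\int S_{M,N}^2\,d\nu.
$$
Computing the first and second moments, $\int S_{M,N}\,d\nu=T_{M,N}:=\sum_{n=M}^{N}\nu(E_n)$ and $\int S_{M,N}^2\,d\nu=T_{M,N}+D_{M,N}$ with $D_{M,N}:=\sum_{M\le i\ne j\le N}\nu(E_i\cap E_j)$, I obtain the finite-$N$ bound
$$
\nu\Bigl(\bigcup_{n=M}^{N}E_n\Bigr)\ge\frac{T_{M,N}^{2}}{T_{M,N}+D_{M,N}}.
$$

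To reach the $\limsup$, I use $\limsup_{n}E_n=\bigcap_{M\ge 1}\bigcup_{n\ge M}E_n$ and continuity of the finite measure $\nu$ on decreasing sequences:
$$
\nu\bigl(\limsup_n E_n\bigr)=\lim_{M\to\infty}\nu\Bigl(\bigcup_{n\ge M}E_n\Bigr)\ge\lim_{M\to\infty}\limsup_{N\to\infty}\frac{T_{M,N}^{2}}{T_{M,N}+D_{M,N}}.
$$
Using the trivial estimate $\nu(E_i\cap E_j)\le\nu(E_j)$, the contributions to $T_N-T_{M,N}$ and $D_N-D_{M,N}$ coming from indices $<M$ are bounded by a constant depending on $M$ times $T_N$, hence are negligible compared with $T_N^2$ along any subsequence on which $T_N^2/D_N$ stays bounded; this lets me replace $T_{M,N}$, $D_{M,N}$ by $T_N$, $D_N$ inside the $\limsup$ for every fixed $M$.

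The final step, which I expect to be the main (if minor) obstacle, is removing the diagonal term $T_{M,N}$ from the denominator. The hypothesis $\sum_n\nu(E_n)=\infty$ forces $T_{M,N}\to\infty$, while the finite-$N$ bound is uniformly dominated by $\nu(\Omega)<\infty$; combining these,
$$
D_{M,N}\ge\frac{T_{M,N}^{2}}{\nu(\Omega)}-T_{M,N},
$$
so $D_{M,N}/T_{M,N}\to\infty$ and $T_{M,N}+D_{M,N}\sim D_{M,N}$ along any subsequence realizing the $\limsup$. This reduces the inequality to the stated form $\nu(\limsup_n E_n)\ge\limsup_{N\to\infty}T_N^{2}/D_N$, completing the proof.
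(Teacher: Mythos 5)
The paper does not prove this lemma; it is quoted from Chung--Erd\H{o}s \cite{Chung} and Kochen--Stone \cite{KoS}, so there is no paper proof to compare against. Your argument is the standard second-moment (Cauchy--Schwarz) proof and it is correct: the finite-$N$ bound
$\nu\bigl(\bigcup_{n=M}^{N}E_n\bigr)\ge T_{M,N}^{2}/(T_{M,N}+D_{M,N})$
is exactly right, and passing to $\limsup_n E_n$ via continuity of the finite measure on the decreasing family $\bigcup_{n\ge M}E_n$ is the correct mechanism.

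Two small expository remarks. First, the tail-truncation step can be made cleaner than your phrasing ``replace $T_{M,N}$, $D_{M,N}$ by $T_N$, $D_N$'' suggests: simply note that $T_{M,N}=T_N-c_M$ with $c_M:=\sum_{n<M}\nu(E_n)$ fixed, and $D_{M,N}\le D_N$ since it is a subsum of nonnegative terms; hence for every fixed $M$, along any subsequence $N_k$ realizing $L:=\limsup_N T_N^2/D_N$ and using $T_{N_k}\to\infty$,
\[
\frac{T_{M,N_k}^2}{T_{M,N_k}+D_{M,N_k}}
\ge \frac{(T_{N_k}-c_M)^2}{T_{N_k}+D_{N_k}}
\longrightarrow L ,
\]
where $T_{N_k}/D_{N_k}\to 0$ follows, as you observe, from $T_{N}^2\le\nu(\Omega)\,(T_{N}+D_{N})$ and $T_N\to\infty$. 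This gives $\nu\bigl(\bigcup_{n\ge M}E_n\bigr)\ge L$ for every $M$ in one stroke, so the outer $\lim_{M\to\infty}$ is not even needed beyond the trivial monotone limit. Second, it is worth a sentence to note that the same inequality $T_N^2\le\nu(\Omega)(T_N+D_N)$ forces $D_N>0$ for all large $N$, so the right-hand side of the lemma is well defined and in fact bounded by $\nu(\Omega)$; this rules out any degenerate $\limsup=+\infty$ reading.
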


The Chung-Erd\"{o}s lemma enables one to conclude the positivity of the measure of a set. In applications, to arrive at a full measure result, one often uses the Chung-Erd\"{o}s lemma restricted to a local set and then applies the following proposition.


\begin{lem}[\cite{BDV06}]\label{l2.2} Let $\Omega$ be a metric space, and let $\nu$ be a finite doubling\footnote{A measure $\nu$ on $\Omega$ is called doubling if  $\exists\, c>0$ such that for any $x\in \Omega$ and $r>0$, $
\nu\big(B(x,2r)\big)\le c\cdot \nu\big(B(x,r)\big)
$.}
 Borel measure on $\Omega$.
Let $E$ be a Borel subset of $\Omega$. Assume that there are constants $r_0$ and $c>0$ such that $$
\nu(E \cap B)\ge c\cdot \nu(B)\ \ {\text{for any ball}}\ \ B\subset \Omega \ {\text{with}}\ r_B<r_0.$$
Then $E$ has full measure in $\Omega$, i.e. $\nu(\Omega\smallsetminus E)=0.$
\end{lem}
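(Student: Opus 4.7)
The plan is a proof by contradiction, whose only nontrivial input is the Lebesgue density theorem in the doubling metric-measure setting. Set $F := \Omega \smallsetminus E$ and suppose, towards a contradiction, that $\nu(F) > 0$. The strategy is to exhibit a point of $F$ around which arbitrarily small balls are $\nu$-almost entirely contained in $F$, and hence meet $E$ in a set of density less than $c$; this directly contradicts the hypothesis $\nu(E\cap B)\ge c\,\nu(B)$ for $r_B < r_0$.

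The key tool is the following differentiation statement: since $\nu$ is a finite doubling Borel measure on the metric space $\Omega$, for every Borel set $F \subset \Omega$ and $\nu$-almost every $x \in F$,
$$
\lim_{r\to 0^+}\frac{\nu(F \cap B(x,r))}{\nu(B(x,r))} = 1.
$$
This is the standard Lebesgue density theorem for doubling Borel measures, and is proved via a Vitali-type covering argument (see, e.g., Heinonen, \emph{Lectures on Analysis on Metric Spaces}, Ch.~1); the doubling condition is precisely what makes the covering argument work in an abstract metric setting. Since by assumption $\nu(F)>0$, the set of density points of $F$ inside $F$ is nonempty; pick one such $x_0 \in F$.

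Now the contradiction is immediate. For all $r > 0$ sufficiently small we have
$$
\frac{\nu(F\cap B(x_0,r))}{\nu(B(x_0,r))} > 1-c,
$$
which, using $E = \Omega\smallsetminus F$ and the finiteness of $\nu$, rewrites as
$$
\nu\bigl(E \cap B(x_0,r)\bigr) < c\cdot \nu\bigl(B(x_0,r)\bigr).
$$
Choosing additionally $r < r_0$, the ball $B = B(x_0,r)$ violates the assumed lower bound $\nu(E\cap B)\ge c\,\nu(B)$. This contradiction forces $\nu(F) = 0$, i.e.\ $E$ has full measure.

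The only genuine obstacle is justifying the density theorem in this abstract setting; everything else is a one-line manipulation. If one prefers to avoid citing the density theorem, an alternative is to argue directly: cover $F$ up to measure zero by a Vitali family of small balls and sum the inequality $\nu(E\cap B) \ge c\,\nu(B)$ over that family, concluding $\nu(E\cap F^*)\ge c\,\nu(F^*)$ for a set $F^*\supset F$ of essentially the same measure, and then iterating to drive $\nu(F)$ to zero. Either route uses doubling in exactly the same essential way.
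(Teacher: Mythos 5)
Your proof is correct. The paper does not prove this lemma itself — it cites it from Beresnevich--Dickinson--Velani \cite{BDV06} — but your argument via the Lebesgue density theorem for doubling measures (take $x_0$ a density point of $F=\Omega\smallsetminus E$, shrink $r<r_0$, and contradict the hypothesis) is the standard proof of this fact and matches what is done in \cite{BDV06}; the density theorem in the doubling metric setting is itself obtained by the Vitali-type covering argument you sketch as an alternative, so the two routes you describe are really the same argument at different levels of packaging.
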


The following $5r$-covering lemma for rectangles will be used frequently later. Generally speaking, there are no such covering lemmas
for arbitrary rectangles compared with the ones for balls; some additional assumptions on the rectangles are needed.
  In the product space 
$X =   \prod_{i=1}^d X_i$ we say that two aligned rectangles
  $$
\prod_{i=1}^d B(x_i, r_i), \ \ \prod_{i=1}^d B(y_i, \epsilon_i)
$$ are uniform in size if $$
r_i\ge \epsilon_i\ {\text{for some}} \ 1\le i\le d\Longrightarrow r_i\ge \epsilon_i, \ {\text{for all}}\ 1\le i\le d.
$$
\begin{lem} Let $(X, {\text{dist}})$ be the product of the metric spaces $(X_i, {\text{dist}_i})$ for $1\le i \le d$. Every family $\mathcal{G}$ of
aligned rectangles which are uniform in size and have bounded diameters in $X$ contains a disjoint subfamily $\mathcal{F}$ such that
$$\bigcup_{R\in \mathcal{G}}R\subset \bigcup_{R\in \mathcal{F}}5R.
$$
\end{lem}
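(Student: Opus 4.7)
The plan is to adapt the classical Vitali covering argument (used for balls) to rectangles, with the uniform-in-size hypothesis supplying the missing ingredient. The first key observation is that this hypothesis upgrades to a \emph{total order} on $\mathcal{G}$: for any two rectangles $R=\prod_i B(x_i,r_i)$ and $R'=\prod_i B(y_i,\epsilon_i)$ in $\mathcal{G}$, either $r_i\le \epsilon_i$ for every $i$ or $r_i\ge \epsilon_i$ for every $i$. Indeed, at least one of $r_1\le \epsilon_1$ or $r_1\ge \epsilon_1$ must hold, and the uniform-in-size condition propagates the direction to every coordinate. A routine triangle inequality then shows that if $R\cap R'\ne \emptyset$ and $r_i\le \epsilon_i$ for every $i$, then $R\subset 3R'\subset 5R'$, since $\text{dist}_i(x_i,y_i)\le r_i+\epsilon_i\le 2\epsilon_i$ forces every $w\in B(x_i,r_i)$ to lie in $B(y_i,3\epsilon_i)$.

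Next, fix an upper bound $D<\infty$ on all radii (which exists by bounded diameters), and assign to each $R\in \mathcal{G}$ the size class $\mathbf{b}(R)=(b_1,\dots,b_d)\in \mathbb{N}^d$ defined by $r_i\in (2^{-b_i}D,\,2^{-b_i+1}D]$. This partitions $\mathcal{G}$ into cells $\{\mathcal{G}_{\mathbf{b}}\}$. The total order from the previous paragraph forces the set of indices $\mathbf{b}$ of non-empty cells to be totally ordered in $(\mathbb{N}^d,\le_{\text{coord}})$, and since this poset admits no infinite strictly decreasing chain, one can enumerate the non-empty cells as $\mathbf{b}^{(1)}<\mathbf{b}^{(2)}<\cdots$, i.e.\ from the largest rectangles down to the smallest.

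Now construct $\mathcal{F}$ stage by stage: at stage $n$, use Zorn's lemma to pick a maximal disjoint subfamily $\mathcal{F}_n\subset \mathcal{G}_{\mathbf{b}^{(n)}}$ subject to the extra requirement that each of its rectangles is also disjoint from every rectangle in $\mathcal{F}_1\cup\cdots\cup \mathcal{F}_{n-1}$, and set $\mathcal{F}=\bigcup_n \mathcal{F}_n$. Disjointness of $\mathcal{F}$ is then automatic. For the covering, take any $R\in \mathcal{G}_{\mathbf{b}^{(n)}}\setminus \mathcal{F}_n$; maximality forces $R$ to meet some $R'\in \mathcal{F}_m$ with $m\le n$. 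If $m<n$, then $\mathbf{b}^{(m)}<\mathbf{b}^{(n)}$ coordinate-wise, so $\epsilon_i>r_i$ for at least one $i$, and the total order promotes this to $R\le R'$ in every coordinate, yielding $R\subset 5R'$ by the first-paragraph estimate. If $m=n$, then $r_i$ and $\epsilon_i$ lie in the same dyadic band, hence differ by less than a factor of $2$; an elementary triangle-inequality computation (the worst case being $r_i$ close to $2\epsilon_i$ or vice versa) still yields $R\subset 5R'$.

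The main obstacle is conceptual rather than computational: the point is to recognize that maximality alone is not enough in the rectangle setting, since a thin selected rectangle can fail to absorb much longer candidates that happen to intersect it, and that the uniform-in-size hypothesis is precisely what is needed in order to partition $\mathcal{G}$ into a well-ordered chain of dyadic size classes so that a top-down greedy Vitali selection goes through. Once this structural step is in place, the constant $5$ drops out of routine triangle-inequality bookkeeping.
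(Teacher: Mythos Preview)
Your argument is correct and is precisely the adaptation of the classical $5r$-covering lemma that the paper has in mind; indeed the paper omits the proof entirely, saying only that ``the proof applies the same idea for the classical $5r$-covering lemma for balls''. Your write-up makes explicit the one extra ingredient beyond the ball case, namely that the uniform-in-size hypothesis totally orders the radii vectors, so that a dyadic stratification of $\mathcal{G}$ by size yields a chain of cells through which one can run the standard top-down greedy selection.
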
The proof applies the same idea for the classical $5r$-covering lemma for balls, so we omit the proof here. For a proof of the $5r$-covering lemma for balls one is referred to \cite[Theorem 1.2]{Hein} for the case of general
metric spaces or to \cite[Theorem 2.1]{Matt} for a constructive proof when the metric space is
boundedly compact.
\smallskip

\noindent{\em Proof of Theorem \ref{t1}}.
  We will apply the Chung-Erd\"{o}s 
  inequality to $\W(\Psi)$ restricted to an arbitrary ball. Fix a ball $B\subset X$. By the monotonicity of $\Psi$, it is trivial that \begin{align}\label{1}
\W(\Psi)\cap B&=\limsup_{n\to \infty} \left(B\cap\bigcup_{\alpha\in J_n}\prod_{i=1}^d\Delta\Big(\R_{\alpha, i}, \psi_i(\beta_{\alpha})\Big)\right)\nonumber\\
&\supset \limsup_{n\to \infty} \left(B\cap\bigcup_{\alpha\in J_n}\prod_{i=1}^d\Delta\Big(\R_{\alpha, i}, \psi_i(u_n)\Big)\right).
\end{align}
We note that this is the only point where monotonicity of $\Psi$ is used. That is why we do not require monotonicity in Theorem \ref{t2}, since (\ref{1}) will be an equality in that case.

\smallskip
  {\em Step 1}.
  For each $n$, cover the intersection 
   $$
  \frac{1}{2}B\cap \bigcup_{\alpha\in J_n}\prod_{i=1}^d\Delta\Big(\R_{\alpha, i},  \rho_i(u_n)\Big)
  $$ by the collection of rectangles in $X$ of the following form:
  $$\left\{
  \widetilde{\mathcal{R}}=\prod_{i=1}^d B\big(x_i, \rho_i(u_n)\big): x=(x_1,\dots, x_d)\in \R_{\alpha}, \ \alpha\in J_n\right\}.$$ Then  one can use the
   $5r$-covering lemma for these aligned rectangles (it is clear that the uniformity in size condition is satisfied)  
  to choose a certain subfamily $\mathcal{F}_n$ of those rectangles $\widetilde{\mathcal{R}}$. Denoting by $\mathcal{A}_n$ the collection of their centers, we can guarantee that these rectangles satisfy  the following two assumptions: \begin{itemize}
  \item (1)  Disjointness $$
  5\widetilde{\mathcal{R}}\cap 5\widetilde{\mathcal{R}}'=\left(\prod_{i=1}^d B\big(x_{i}, 5\rho_i(u_n)\big)\right)\cap \left(\prod_{i=1}^d B\big(x_{i}', 5\rho_i(u_n)\big)\right)=\emptyset, \ \ {\text{for any}}\ x\ne x'\in \mathcal{A}_n;
  $$
  \item (2) Almost packing $$
  \frac{1}{2}B\cap \bigcup_{\alpha\in J_n}\prod_{i=1}^d\Delta\big(R_{\alpha, i},  \rho_i(u_n)\big)\subset \bigcup_{\widetilde{\mathcal{R}}\in \mathcal{F}_n}5\widetilde{\mathcal{R}}=\bigcup_{x\in \mathcal{A}_n}\prod_{i=1}^d B\big(x_{i}, 5\rho_i(u_n)\big)\subset B.
  $$ Thus by a 
  measure computation argument, together with the ubiquity property applied to $\frac12B$, there is an integer $n_o(B)$ such that for all $n\ge n_o(B)$ one has $$
  \sharp \mathcal{A}_n\asymp \prod_{i=1}^d\left(\frac{r_B}{\rho_i(u_n)}\right)^{\delta_i}.
  $$ 
\end{itemize}
We will refer to the rectangles $\widetilde{\mathcal{R}}$ in $\mathcal{F}_n$ as {\em big rectangles} of level $n$.

\smallskip

{\em Step 2}. We 
intend to construct a subset of $\W(\Psi)$. Fix  a rectangle $\widetilde{\mathcal{R}}\in \mathcal{F}_n$ centered at 
$x\in \mathcal{A}_n$. Let $\R_{\alpha}$ be 
a resonant set containing $x$ for some $\alpha\in J_n$ 
(if there are several
of these $\alpha$, we only choose and fix one). Then we consider the set $$
\widetilde{\mathcal{R}}\cap \prod_{i=1}^d \Delta\big(\R_{\alpha, i}, \psi_i(u_n)\big)=\prod_{i=1}^d \Big[B\big(x_i, \rho_i(u_n)\big)\cap \Delta\big(\R_{\alpha, i}, \psi_i(u_n)\big)\Big].
$$
We can cover it by rectangles of the form $$
\mathcal{R}=\prod_{i=1}^dB\big(z_i, \psi_i(u_n)\big)
$$ with 
centers in $\R_{\alpha}$. 
Again applying the $5r$-covering lemma, we get a certain subfamily $\mathcal{C}(\widetilde{\mathcal{R}})$ of rectangles. Denoting the collection of their centers by $\mathcal{C}(x)$, we see that these rectangles satisfy the following two conditions: \begin{itemize}
  \item (1) Disjointness $$
  \prod_{i=1}^d B\big(z_{i}, 5\psi_i(u_n)\big)\cap \prod_{i=1}^d B\big(z_{i}', 5\psi_i(u_n)\big)=\emptyset  \ \ {\text{for any}}\ z\ne z' \in \mathcal{C}(x);
  $$
  \item (2) Almost packing $$
  \frac{1}{2}\widetilde{\mathcal{R}}\cap \prod_{i=1}^d \Delta\big(\R_{\alpha, i}, \psi_i(u_n)\big)\subset \bigcup_{z\in \mathcal{C}(x)}\prod_{i=1}^d B\big(z_{i}, 5\psi_i(u_n)\big)\subset \widetilde{\mathcal{R}}\cap \prod_{i=1}^d \Delta\big(\R_{\alpha, i}, 5\psi_i(u_n)\big).
  $$ Recall the $\kappa_i$-scaling property of $\R_{\alpha, i}$ for each $1\le i\le d$, so still by a measure computation argument, one has $$
  \sharp \mathcal{C}(x)\asymp \prod_{i=1}^d\left(\frac{\rho_i(u_n)}{\psi_i(u_n)}\right)^{\delta_i \kappa_i}.
  $$
\end{itemize}
We will refer to these small rectangles $\mathcal{R}$ as to {\em shrunk rectangles} of level $n$.
Then define
$$
\mathcal{E}_n=\Big\{\mathcal{R}\in  \mathcal{C}(\widetilde{\mathcal{R}}):\widetilde{\mathcal{R}}\in \mathcal{F}_n\Big\}=\Big\{\prod_{i=1}^dB\big(z_i, \psi_i(u_n)\big):z\in \mathcal{C}(x), x\in \mathcal{A}_n\Big\}
$$ and $$
E_n:=\bigcup_{\mathcal{R}\in \mathcal{E}_n}\mathcal{R}=\bigcup_{\widetilde{\mathcal{R}}\in \mathcal{F}_n}\bigcup_{\mathcal{R}\in \mathcal{C}(\widetilde{\mathcal{R}})}\mathcal{R}=\bigcup_{x\in \mathcal{A}_n}\bigcup_{z\in \mathcal{C}(x)}\prod_{i=1}^dB\big(z_i, \psi_i(u_n)\big).
$$

The process of the construction of $E_n$ can be outlined as follows: for a given ball $B$, \begin{align*}
\frac12B\overset{{\text{ubiquity}}}{--\longrightarrow}&\ \mathcal{F}_n\ {\text{or}}\ \mathcal{A}_n: \ {\text{big rectangles}}\ \widetilde{\mathcal{R}}=\prod_{i=1}^dB\big(x_i, \rho_i(u_n)\big)\\
&\overset{{\text{intersect with}}\ \Delta(\R_{\alpha}, \psi_i(u_n))}{----------\longrightarrow}\mathcal{C}(\widetilde{\mathcal{R}})\ {\text{or}}\ \mathcal{C}(x):\ {\text{shrunk rectangles}}\ {\mathcal{R}}=\prod_{i=1}^dB\big(z_i, \psi_i(u_n)\big).
\end{align*}

Clearly $$
B\cap \limsup_{n\to \infty}\left(\bigcup_{\alpha\in J_n}\prod_{i=1}^d\Delta\big(\R_{\alpha, i}, \psi_i(\beta_{\alpha})\big)\right)\supset \limsup_{n\to \infty} E_n.
$$
The 
limsup set in the right hand side is the one to which we will apply the Chung-Erd\"{o}s lemma.

At first, it is easy to see that the measure of $E_n$ can be estimated as follows: \begin{align*}
 \mu(E_n)&=\sum_{x\in \mathcal{A}_n} \sharp \mathcal{C}(x)\cdot \prod_{i=1}^d\psi_i(u_n)^{\delta_i}
  =\prod_{i=1}^d\left(\frac{r_B}{\rho_i(u_n)}\right)^{\delta_i}\cdot \prod_{i=1}^d\left(\frac{\rho_i(u_n)}{\psi_i(u_n)}\right)^{\delta_i \kappa_i}\cdot \prod_{i=1}^d\psi_i(u_n)^{\delta_i}\\
  &\asymp\mu(B)\cdot \prod_{i=1}^d\left(\frac{\psi_i(u_n)}{\rho_i(u_n)}\right)^{\delta_i(1-\kappa_i)}.
\end{align*}
So $$
\sum_{n=1}^{\infty}\mu(E_n)=\infty,
$$ and then the first condition in the Chung-Erd\"{o}s lemma is satisfied.

\smallskip

{\em Step 3}.
We estimate the 
measure of $E_m\cap E_n$ for $m<n$. Notice that \begin{align*}
  \mu(E_m\cap E_n)=\sum_{\mathcal{R}\in \mathcal{E}_m}\mu(\mathcal{R}\cap E_n)&=\sum_{x\in \mathcal{A}_m}\sum_{z\in \mathcal{C}(x)}\mu\left(\prod_{i=1}^dB\big(z_i, \psi_i(u_m)\big)\cap E_n\right)\\
  &=\sum_{x\in \mathcal{A}_m}\sum_{z\in \mathcal{C}(x)}\mu\left(\prod_{i=1}^dB\big(z_i, \psi_i(u_m)\big)\cap \bigcup_{x'\in \mathcal{A}_n}\bigcup_{z'\in \mathcal{C}(x')}\prod_{i=1}^dB\big(x_i', \psi_i(u_n)\big)\right).
\end{align*}

Since all the rectangles in $\mathcal{E}_n$ are of the same size, we need only estimate the number of elements in $\mathcal{E}_n$ which can intersect a given  element in $\mathcal{E}_m$.  So fix an arbitrary rectangle $\mathcal{R}$ in $\mathcal{E}_m$ which is of the form $$
\mathcal{R}=\prod_{i=1}^dB\big(z_i, \psi_i(u_m)\big).
$$ Recall the construction of $E_n$. At first, we estimate the number of 
big rectangles in $\F_n$ which can intersect $\mathcal{R}$. Remember that all the big rectangles in $\F_n$ are of the same side lengths $\big(\rho_1(u_n),\dots, \rho_d(u_n)\big)$.
Let $$
I_1:=\big\{1\le i\le d: \psi_i(u_m)\ge \rho_i(u_n)\big\},\ \ I_2:=\big\{1\le i\le d: \psi_i(u_m)< \rho_i(u_n)\big\}.
$$
Define an enlarged body of the rectangle $\mathcal{R}$: $$
H:=\prod_{i=1}^dB(z_i, 3\epsilon_i), \ {\text{where}}\ \epsilon_i:=\left\{
                                                              \begin{array}{ll}
                                                                \psi_{i}(u_m), & \hbox{for $i\in I_1$;} \\
                                                                \rho_i(u_n), & \hbox{for $i\in I_2$.}
                                                              \end{array}
                                                            \right.
$$ Thus all the big rectangles in $\F_n$ which can intersect $\mathcal{R}$ are contained in $H$. Since these big rectangles in $\F_n$ are disjoint, a measure computation argument gives that the number of big rectangles in $\F_n$ which can possibly intersect the rectangle $\mathcal{R}$ is bounded from above by
\begin{equation*}\label{e1}
\prod_{i\in I_1}\left(\frac{\psi_i(u_m)}{\rho_i(u_n)}\right)^{\delta_i}.
\end{equation*}

Secondly, fix a center $x'\in \mathcal{A}_n$ or, equivalently, a big rectangle $\widetilde{\mathcal{R}}_n=\prod_{i=1}^dB\big(x_i', \rho_i(u_n)\big)$ in $\mathcal{F}_n$ which has non-empty intersection with the rectangle $\mathcal{R}$. We consider the number $L$ of shrunk rectangles in $\mathcal{E}_n$ which can intersect the set $$
\mathcal{R}\cap \widetilde{\mathcal{R}}_n=\prod_{i=1}^dB\big(z_i, \psi_i(u_m)\big)\cap \prod_{i=1}^dB\big(x_i', \rho_i(u_n)\big).
$$
Clearly all these $L$ shrunk rectangles are contained in \begin{equation}\label{e2}
\prod_{i=1}^dB\big(z_i, 2\psi_i(u_m)\big)\cap \prod_{i=1}^dB\big(x_i', 2\rho_i(u_n)\big)\cap \prod_{i=1}^d\Delta\big(\R_{\alpha, i}, \psi_i(u_n)\big),
\end{equation} where $\alpha\in J_n$ is the index for which $x'=(x_1',\dots, x_d')$ 
lies in $\R_{\alpha}$. Then by a measure computation argument, the number $L$ can be estimated as
\begin{align*}
  L&\le \frac{{\text{the measure of the set (\ref{e2})}}}{{\text{the measure of a shrunk rectangle}}}.\end{align*}
Thus by the $\kappa$-scaling property of $\R_{\alpha}$, it follows that \begin{align*}
L&\ll \frac{\displaystyle \prod_{i\in I_1}\rho_i(u_n)^{\delta_i\kappa_i}\cdot \psi_i(u_n)^{\delta_i(1-\kappa_i)}\cdot \prod_{i\in I_2}\psi_i(u_m)^{\delta_i \kappa_i}\cdot \psi_i(u_n)^{\delta_i(1-\kappa_i)}}
  {\displaystyle \prod_{i=1}^d\psi_i(u_n)^{\delta_i}}\\
  &=\prod_{i\in I_1}\left(\frac{\rho_i(u_n)}{\psi_i(u_n)}\right)^{\delta_i\kappa_i}\cdot \prod_{i\in I_2}\left(\frac{\psi_i(u_m)}{ \psi_i(u_n)}\right)^{\delta_i\kappa_i}.
\end{align*}

At last, we can estimate the measure of $E_m\cap E_n$. More precisely, \begin{align*}
  \mu(E_m\cap E_n)&\le \sum_{{\mathcal{R}}\in \mathcal{E}_m}\prod_{i\in I_1}\left(\frac{\psi_i(u_m)}{\rho_i(u_n)}\right)^{\delta_i}\cdot L\cdot \prod_{i=1}^d\psi_i(u_n)^{\delta_i}\\
  &\ll\sum_{{\mathcal{R}}\in \mathcal{E}_m}\prod_{i\in I_1}\left(\frac{\psi_i(u_m)}{\rho_i(u_n)}\right)^{\delta_i}\cdot \prod_{i\in I_1}\left(\rho_i(u_n)^{\delta_i\kappa_i}\cdot \psi_i(u_n)^{\delta_i(1-\kappa_i)}\right)\cdot \prod_{i\in I_2}\left(\psi_i(u_m)^{\delta_i\kappa_i}\cdot \psi_i(u_n)^{\delta_i(1-\kappa_i)}\right).\end{align*}
  Recall the number of the elements in $\mathcal{E}_m$. It follows that
  \begin{align}
  \mu(E_m\cap E_n) &\ll \left[\prod_{i=1}^d\left(\frac{r_B}{\rho_i(u_m)}\right)^{{\delta_i}}\cdot \prod_{i=1}^d\left(\frac{\rho_i(u_m)}{\psi_i(u_m)}\right)^{\delta_i\kappa_i}\right]\cdot \prod_{i\in I_1}\left(\frac{\psi_i(u_m)}{\rho_i(u_n)}\right)^{\delta_i}\cdot \nonumber\\
   &\qquad \qquad \qquad \qquad \qquad \qquad \prod_{i\in I_1}\left(\rho_i(u_n)^{\delta_i\kappa_i}\cdot \psi_i(u_n)^{\delta_i(1-\kappa_i)}\right)\cdot \prod_{i\in I_2}\left(\psi_i(u_m)^{\delta_i\kappa_i}\cdot \psi_i(u_n)^{\delta_i(1-\kappa_i)}\right)\nonumber\\
  &=\mu(B)\cdot \prod_{i\in I_1}\left(\frac{\psi_i(u_m)}{\rho_i(u_m)}\right)^{\delta_i(1-\kappa_i)}\cdot \prod_{i\in I_2}\left(\frac{\rho_i(u_n)}{\rho_i(u_m)}\right)^{\delta_i(1-\kappa_i)}\cdot \prod_{i=1}^d\left(\frac{\psi_i(u_n)}{\rho_i(u_n)}\right)^{\delta_i(1-\kappa_i)}\label{e3}\\
&=\mu(B)\cdot \prod_{i=1}^d\left(\frac{\psi_i(u_m)}{\rho_i(u_m)}\right)^{\delta_i(1-\kappa_i)}\cdot \prod_{i\in I_2}\left(\frac{\psi_i(u_n)}{\psi_i(u_m)}\right)^{\delta_i(1-\kappa_i)}\cdot \prod_{i\in I_1}\left(\frac{\psi_i(u_n)}{\rho_i(u_n)}\right)^{\delta_i(1-\kappa_i)}\label{e4}.
\end{align}
\noindent
When $I_2=\emptyset$,
\begin{align*}
  \mu(E_m\cap E_n)\ll\mu(B)\cdot \prod_{i\in I_1}\left(\frac{\psi_i(u_m)}{\rho_i(u_m)}\right)^{\delta_i(1-\kappa_i)}\cdot \left(\prod_{i=1}^d\frac{\psi_i(u_n)}{\rho_i(u_n)}\right)^{\delta_i(1-\kappa_i)}\asymp \mu(E_m)\cdot\mu(E_n)\cdot\mu(B)^{-1}.
\end{align*}
When $I_2\ne \emptyset$, \begin{itemize}\item if $\rho$ is $\lambda$-regular, then by (\ref{e3}) it follows that\begin{align*}
  \mu(E_m\cap E_n)&\ll\mu(B)\cdot \prod_{i\in I_1}\left(\frac{\psi_i(u_m)}{\rho_i(u_m)}\right)^{\delta_i(1-\kappa_i)}\cdot \prod_{i\in I_2} \lambda^{(n-m)\delta_i(1-\kappa_i)}\cdot \left(\prod_{i=1}^d\frac{\psi_i(u_n)}{\rho_i(u_n)}\right)^{\delta_i(1-\kappa_i)}\\
&\le\mu(B)\cdot \lambda^{(n-m)\epsilon}\cdot  \left(\prod_{i=1}^d\frac{\psi_i(u_n)}{\rho_i(u_n)}\right)^{\delta_i(1-\kappa_i)}\ \ \ \ \big({\text{by}}\ \psi_i(u_m)\le \rho_i(u_m)\big)\\
  & \asymp\mu(E_n)\cdot \lambda^{(n-m)\epsilon}.
\end{align*}
\item if $\Psi$ is $\lambda$-regular, then by (\ref{e4}) it follows that\begin{align*}
  \mu(E_m\cap E_n)&\ll\mu(B)\cdot \prod_{i=1}^d\left(\frac{\psi_i(u_m)}{\rho_i(u_m)}\right)^{\delta_i(1-\kappa_i)}\cdot \prod_{i\in I_2} \lambda^{(n-m)\delta_i(1-\kappa_i)}\cdot \left(\prod_{i\in I_1}\frac{\psi_i(u_n)}{\rho_i(u_n)}\right)^{\delta_i(1-\kappa_i)}\\
&\le\mu(B)\cdot  \left(\prod_{i=1}^d\frac{\psi_i(u_m)}{\rho_i(u_m)}\right)^{\delta_i(1-\kappa_i)}\cdot \lambda^{(n-m)\epsilon}\\
  & \asymp \mu(E_m)\cdot \lambda^{(n-m)\epsilon}.
\end{align*}\end{itemize}

{\em Step 4.} Finally, to apply the Chung-Erd\"{o}s lemma, we calculate the correlations. \begin{itemize}
  \item if $\rho$ is $\lambda$-regular,
\begin{align*}
  \sum_{1\le m<n\le N}\mu(E_m\cap E_n)&=\sum_{n=1}^N\sum_{m=1}^{n-1}\mu(E_m\cap E_n)\\
&\ll \sum_{n=1}^N\sum_{m=1}^{n-1}\left(\frac{1}{\mu(B)}\cdot \mu(E_m)\cdot\mu(E_n)+\mu(E_n)\cdot \lambda^{(n-m)\epsilon}\right)\\
&\ll \frac{1}{\mu(B)}\cdot \left(\sum_{n=1}^N\mu(E_n)\right)^2+\sum_{n=1}^N\mu(E_n).
\end{align*}
\item if $\Psi$ is $\lambda$-regular,\begin{align*}
  \sum_{1\le m<n\le N}\mu(E_m\cap E_n)&=\sum_{m=1}^N\sum_{n=m+1}^{N}\mu(E_m\cap E_n)\\
&\ll \sum_{m=1}^N\sum_{n=m+1}^{N}\left(\frac{1}{\mu(B)}\cdot \mu(E_m)\cdot\mu(E_n)+\mu(E_m)\cdot \lambda^{(n-m)\epsilon}\right)\\
&\ll \frac{1}{\mu(B)}
 \left(\sum_{m=1}^N\mu(E_m)\right)^2+\sum_{m=1}^N\mu(E_m).
\end{align*}
\end{itemize}
In a summary, we have shown $$
\sum_{1\le m< n\le N}\mu(E_m\cap E_n)\ll\mu(B)^{-1}\left(\sum_{1\le n\le N}\mu(E_n)\right)^2+\sum_{1\le n\le N}\mu(E_n).
$$ By the Chung-Erd\"{o}s lemma, it follows that 
$$\mu\big(\W(\Psi)\cap B\big)\gg \mu(B).$$

Clearly, the measure $\mu$ is Ahlfors regular, hence doubling. Thus by Lemma \ref{l2.2}, one concludes that $\W(\Psi)$ is of full measure.
\hfill $\Box$

%

\section{A Shrinking Target Problem} \label{st}

Here we are going to prove a statement slightly more general than Theorem \ref{t2}.
Let $b_1,\dots, b_d\ge 2$ be a $d$-tuple of integers. Let $$\Lambda_{i}\subset \{0,1,\dots, b_i-1\}, \ {\text{with}} \ \ \sharp \Lambda_{i}\ge 2 \ \ {\text{for}}
\ \ 1\le i\le d.$$ Then let $\C_i$ be the Cantor sets defined by the iterated function systems $$
\Big\{g_{b_i,k}(x)=\frac{x+k}{b_i}, \ x\in [0,1], \  k\in \Lambda_{i}\Big\}.
$$ The natural Cantor measure $\mu_i$ supported on $\C_i$ is Ahlfors regular \cite{Hutch} with exponent $\delta_i=\frac{\log \sharp \Lambda_i}{\log b_i}$.

For $d$ positive functions $\psi_i: \mathbb{R}_{+}\to \mathbb{R}_+$ ($1\le i\le d$), define
$$M_c(\psi):=\Big\{(x_1,\dots, x_d)\in \prod_{i=1}^d\mathcal{C}_i: \|b_i^nx_i-x_{o,i}\|<\psi_i(n), \ 1\le i\le d,\ {\text{i.m.}}\ n\in \N\Big\}, \ \ x_o\in \prod_{i=1}^d\mathcal{C}_i.
$$

We use the symbolic representations of the points $x_i$ in $\C_i$. For each $\w_i=(\epsilon_1,\dots, \epsilon_n)\in \Lambda^n_{i}$ with $n\ge 1$, write $$
I_{n,b_i}(\w_i):=g_{b_i, \epsilon_1}\circ g_{b_i, \epsilon_2}\circ \cdots \circ g_{b_i, \epsilon_n}[0,1], \ \ x_{i}(\w_i)=\frac{\epsilon_1}{b_i}+\cdots+\frac{\epsilon_n+x_{o,i}}{b_i^n},
$$ in other words $I_{n,b_i}(\w_i)$ is an $n$th order cylinder with respect to $\mathcal{C}_i$, and $x_i(\w_i)$ is the $n$th inverse image of $x_{o,i}$ in $I_{n,b_i}(\w_i)$. Note that for any $\w_i$ there is an inverse image of $x_{o,i}$ in  $I_{n,b_i}(\w_i)$ and the length of $I_{n,b_i}(\w_i)$ is $b_i^{-n}$.

Clearly the set $M_{c}(\psi)$ can be rewritten as \begin{align*}
M_c(\psi)&=\Big\{x\in \prod_{i=1}^d\mathcal{C}_i: \big|x_i-x_{i}(\w_i)\big|<\frac{\psi_i(n)}{b_i^n}, \ \w_i\in \Lambda^n_{i}, 1\le i\le d, \ {\text{i.m.}}\ n\in \N\Big\}
\end{align*}

Thus one has\begin{itemize}\item the index set $J$: $$
J=\Big\{\alpha=(\w_1,\dots, \w_d)\in \prod_{i=1}^d\Lambda_{i}^n: n\ge 1\Big\};
$$
\item the resonant sets $\R_{\alpha}$: $$\R_{\alpha}=\big(x_{1}(\w_1),\dots, x_{d}(\w_d)\big) \  \ \ {\text{for}}\ \alpha=(\w_1,\dots, \w_d);$$

\item the weight function $\beta_{\alpha}$: $$\beta_{\alpha}=n, \ \ {\text{for}}\ \alpha=(\w_1,\dots, \w_d)\in \prod_{i=1}^d\Lambda^n_{i};$$

\item the ubiquitous function $\rho_i$: $$\rho:\mathbb{R}_+\to \mathbb{R}_+: n\to b_i^{-n};$$

\item the sequences $$
\ell_n=u_n=n, \ n\ge 1.
$$
\end{itemize}
\begin{pro}\label{p4} The pair
$(\{\R_{\alpha}\}_{\alpha\in J}, \beta)$ is a ubiquitous system for rectangles with respect to the function $\rho$ and the sequences $\{\ell_n, u_n\}_{n\ge 1}$.
Meanwhile, the $\kappa$-scaling property holds with $\kappa=0$.\end{pro}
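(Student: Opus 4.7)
The plan is to verify the two assertions separately; both follow quickly from the self-similar structure of the Cantor construction, and no serious estimate is needed.

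First, for the $\kappa$-scaling property with $\kappa=0$: each resonant component $\R_{\alpha,i}=\{x_i(\w_i)\}$ is a single point, so $\Delta(\R_{\alpha,i},\epsilon)=B(x_i(\w_i),\epsilon)$. Taking a ball $B(x_i,r)$ in $X_i=\C_i$ whose center lies in $\R_{\alpha,i}$ forces $x_i=x_i(\w_i)$, and for $\epsilon<r$ the intersection is simply $B(x_i(\w_i),\epsilon)$. By $\delta_i$-Ahlfors regularity of $\mu_i$, this has measure $\asymp \epsilon^{\delta_i}$, which matches $r^{\delta_i\cdot 0}\cdot \epsilon^{\delta_i(1-0)}$, as required.

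Second, for the ubiquity: with $\ell_n=u_n=n$ one has $J_n=\prod_i\Lambda_i^n$ and $\rho_i(u_n)=b_i^{-n}$, so the union to be estimated is
$$\bigcup_{\alpha\in J_n}\prod_{i=1}^d B\bigl(x_i(\w_i),b_i^{-n}\bigr)=\prod_{i=1}^d\bigcup_{\w_i\in\Lambda_i^n}B\bigl(x_i(\w_i),b_i^{-n}\bigr).$$
The key observation is that $x_i(\w_i)\in I_{n,b_i}(\w_i)$ and this cylinder has diameter $b_i^{-n}$, hence $I_{n,b_i}(\w_i)\cap \C_i \subseteq B(x_i(\w_i),b_i^{-n})$ as subsets of $X_i=\C_i$. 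Since the level-$n$ cylinders $\{I_{n,b_i}(\w_i)\}_{\w_i\in\Lambda_i^n}$ cover $\C_i$, one deduces $\mu_i\bigl(\bigcup_{\w_i}B(x_i(\w_i),b_i^{-n})\bigr)=\mu_i(\C_i)=1$. Taking the product over $i$, the displayed union has full $\mu$-measure in $X=\prod_i\C_i$, so for any ball $B\subset X$ the relevant intersection has measure $\mu(B)$, yielding the ubiquity inequality with constant $c=1$ and $n_o(B)=1$.

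The only real ``obstacle'' is the bookkeeping verification that the cylinders indeed sit inside the prescribed balls in the subspace metric on $\C_i$; once that is noted, everything reduces to the elementary fact that the natural Cantor measure is supported on the descending intersection of the level-$n$ cylinder covers, so no refined measure estimate is required.
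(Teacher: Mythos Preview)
Your proof is correct and follows essentially the same approach as the paper: both verify that the union $\bigcup_{\alpha\in J_n}\prod_i\Delta(\R_{\alpha,i},\rho_i(u_n))$ is all of $\prod_i\C_i$, whence the ubiquity inequality holds trivially with $c=1$, and both note that $\kappa=0$ since the resonant sets are singletons. The paper's proof is a one-line assertion of the equality $\bigcup_{\w_i\in\Lambda_i^n}\prod_i B(x_i(\w_i),b_i^{-n})=\prod_i\C_i$, whereas you spell out the cylinder-inside-ball inclusion that justifies it; the arguments are otherwise identical.
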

\begin{proof} This is rather simple since $$
\bigcup_{\ell_n\le \beta_{\alpha}\le u_n}\prod_{i=1}^d\Delta\big(\R_{\alpha,i}, \rho_i(u_n)\big)=\bigcup_{\w_i\in \Lambda^n_{i}, 1\le i\le d}\ \prod_{i=1}^dB\big(x_{i}(\w_i), b_i^{-n}\big)=\prod_{i=1}^d\mathcal{C}_i.
$$
\end{proof}
It is trivial that $\rho$ is $\lambda$-regular. Then by Theorem \ref{t1}, it follows that $$
\mu\big(M_c(\psi)\big)=1, \ {\text{or}}\ 0 \Longleftrightarrow \sum_{n=1}^{\infty}\prod_{i=1}^d\psi_i(n)^{\delta_i}=\infty\ {\text{or}}\ <\infty,
$$ where the convergence part follows easily from the Borel-Cantelli lemma.

If we choose $\Lambda_i = \{0,\dots,b_{i-1}\}$ for all $i$, then one has $M_{c}(\psi) =  \mathfrak{S}({\psi})$ and $\mu$ is the Lebesgue measure. Then Theorem \ref{t2} follows.

\section{Simultaneous Diophantine approximation} \label{sim}

In this section we apply 
Theorem \ref{t1} to simultaneous Diophantine approximation, establishing Corollary \ref{corsim} as a warm-up before proving Theorem \ref{schmidt}.
 Recall that we are given  a $d$-tuple $\varphi = \{\varphi_i\}_{1\le i\le d}$ of non-increasing positive functions defined on $\mathbb{Z}_{\ge 0}$ with $$\varphi_i(q)\to 0, \ {\text{as}}\ q\to \infty,$$
 and our goal is to show that the Lebesgue measure of
 $${W}(\varphi)=\big\{x\in [0,1]^d: \|qx_i\|<\varphi_i(q), \ 1\le i\le d, \ \ {\text{i.m.}}\ q\in \mathbb{N}\big\}.
$$
is zero or one according to$$
\sum_{q=1}^{\infty}\prod_{i=1}^d\varphi_i(q)<\infty\ {\text{or}}\ =\infty.
$$


\begin
{proof}[Proof of Corollary \ref{corsim}] 
 First observe that the following conditions can be assumed without loss of generality:
\begin{itemize}
  \item for all $q\gg 1$, \begin{equation}\label{f6}
q\prod_{i=1}^d\varphi_i(q)\le 1
\end{equation} (otherwise by Minkowski's  theorem  ${W}(\varphi)=[0,1]^d$);

\item for all $1\le i\le d$, \begin{equation}\label{f8}
\varphi_i(q)\ge q^{-1-\frac1{2d}}, \ {\text{and so}}\ q^{d+1}\cdot\prod_{i=1}^d\varphi_i(q)\ge q^{1/2}\to \infty.
\end{equation} Otherwise, we define $$
\overline{\varphi}_i(q)=\max\{\varphi_i(q), q^{-1-\frac1{2d}}\},
$$ and consider the set ${{W}}(\overline{\varphi})$. For any $x\in {{W}}(\overline{\varphi})\smallsetminus {W}(\varphi)$, one has, for some index $1\le i\le d$, $$
\|qx_i\|<q^{-1-\frac1{2d}}, \ {\text{for infinitely many}}\ q\in \mathbb{N}.
$$ Thus, by the Borel-Cantelli Lemma, the above set 
is 
Lebesgue null, 
and 
it follows that $$
\mathcal{L}\big({{W}}(\overline{\varphi})\big)=\mathcal{L}\big({W}(\varphi)\big).
$$
\end{itemize}

Now we will check that all the conditions in Theorem \ref{t1} are satisfied by a suitable choice of the ubiquitous function $\rho$.
\begin{itemize}
\item the index and resonant sets: \begin{align*}
J&=\big\{(q, p_1,\dots, p_d): q\in \mathbb{N}, \  0\le p_i\le q, \ 1\le i\le d\big\},\\
\R_{\alpha}&=\left(\frac{p_1}{q},\dots, \frac{p_d}{q}\right)  \ \ {\text{and}}\ \ \beta_{\alpha}=q,\ \ {\text{for}}\ \ \alpha=(q, p_1,\dots, p_d);
\end{align*}
  \item $\mu_i = \mathcal{L}$, which is Ahlfors regular with $\delta_i=1$;
  \item $\kappa$-scaling: $\kappa_i=0$ since $\R_{\alpha, i}$ are points for all $1\le i\le d$ and $\alpha\in J$;
  \item the approximating function: $$
  \psi_i(q)=\frac{\varphi_i(q)}{q}, \ \ 1\le i\le d;
  $$
  \item the ubiquitous fucntion: let \begin{equation}\label{f7}
  \rho_i(q)=\frac{\varphi_i(q)}{q}\cdot \left(q\prod_{i=1}^d\varphi_i(q)\right)^{-1/d},\ 1\le i\le d;
  \end{equation}
  \item $u_n=M^n$ and $\ell_n=M^{n-1}$ with $M\ge 2^{2d+3}$.
\end{itemize}

\begin{lem}[Ubiquity for rectangles]\label{l4.2} With the notation above, the 
pair $(\{\R_{\alpha}\}_{\alpha\in J}, \beta)$ is a ubiquitous system with respect to the function $\rho$ and the sequences $\{\ell_n, u_n\}_{n\ge 1}$.
\end{lem}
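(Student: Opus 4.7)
The plan is to verify the ubiquity hypothesis directly from Minkowski's theorem applied at the correct scale. The key algebraic identity coming out of the definition (\ref{f7}) is
\begin{equation*}
\prod_{i=1}^d \rho_i(u_n) = u_n^{-d-1},
\end{equation*}
equivalently, with $\epsilon_i := u_n\rho_i(u_n) = \varphi_i(u_n)\bigl(u_n\prod_j \varphi_j(u_n)\bigr)^{-1/d}$, one has $u_n\prod_i \epsilon_i = 1$. This is exactly the critical scaling that makes Minkowski's convex body theorem applicable: for every $x \in [0,1]^d$ it produces a nonzero integer vector $(q, p_1, \ldots, p_d)$ with $1 \le q \le u_n$ and $|qx_i - p_i| \le \epsilon_i$ for each $i$. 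A slight enlargement of the box together with the lower bound (\ref{f8}) rules out the degenerate case $q = 0$ for $n$ large.

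If the Minkowski-produced denominator satisfies $q \ge \ell_n = u_n/M$, dividing by $q$ gives $|x_i - p_i/q| \le \epsilon_i/q \le M\rho_i(u_n)$, so $x$ lies in $\prod_i \Delta(\R_{\alpha,i}, M\rho_i(u_n))$ for $\alpha = (q, p_1, \ldots, p_d) \in J_n$. Rescaling each $\rho_i$ by a fixed multiplicative constant is harmless in Theorem \ref{t1} (it scales both sides of the divergence condition identically), so it remains to bound the measure of the ``bad'' set
\begin{equation*}
\mathcal{B}_n := \bigcup_{1 \le q < \ell_n}\bigcup_{p \in \mathbb{Z}^d}\bigl\{x \in [0,1]^d : |qx_i - p_i| \le \epsilon_i \text{ for all } i\bigr\}
\end{equation*}
inside an arbitrary ball $B \subset [0,1]^d$.

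For each $q \in [1, \ell_n)$, I would count the rationals $p/q$ whose associated rectangle of side lengths $2\epsilon_i/q$ meets $B$ and estimate each intersection's volume. In the regime $qr_B \ge 1$ there are $\asymp (qr_B)^d$ such rationals each contributing a rectangle of volume $\prod_i 2\epsilon_i/q = 2^d/(u_nq^d)$, so the total contribution per $q$ is $\asymp r_B^d/u_n$; summation over $q < \ell_n$ gives $\ll \ell_n \mathcal{L}(B)/u_n = \mathcal{L}(B)/M$. In the regime $qr_B < 1$ at most one rational of denominator $q$ is relevant, and conditions (\ref{f6}) and (\ref{f8}) ensure that the intersection volume $\prod_i \min(2r_B, 2\epsilon_i/q)$ also contributes at most $\ll \mathcal{L}(B)/M$ after summation. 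Combining the two regimes yields $\mathcal{L}(\mathcal{B}_n \cap B) \ll \mathcal{L}(B)/M$, so the stipulated choice $M \ge 2^{2d+3}$ forces
\begin{equation*}
\mathcal{L}\Bigl(B \cap \bigcup_{\alpha \in J_n} \prod_{i=1}^d \Delta\bigl(\R_{\alpha,i}, M\rho_i(u_n)\bigr)\Bigr) \ge \tfrac12 \mathcal{L}(B),
\end{equation*}
which is the ubiquity condition (after absorbing $M$ into $\rho$).

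The main obstacle I anticipate is the localization of the bad-set bound: a single-line Minkowski-measure calculation gives $\mathcal{L}(\mathcal{B}_n) \ll 1/M$ globally, but uniformity in $B$ requires the case split between $qr_B \ge 1$ and $qr_B < 1$. Precisely here is where the lower bound (\ref{f8}) on each $\varphi_i$ becomes essential, preventing Dirichlet-exceptional rationals with very small denominator from covering a disproportionate fraction of a small ball; once this uniform local bound on $\mathcal{B}_n \cap B$ is in hand, the rest is bookkeeping.
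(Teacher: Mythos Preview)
Your approach is essentially the same as the paper's: the paper simply defers to Lemma~\ref{l5.3} with $h=1$ and $\Phi_1(q)=q$, and that proof is precisely the Minkowski-plus-bad-set argument you outline, with the algebraic split $(2rq+2)^d\le (4rq)^d+4^d$ playing the role of your dichotomy $qr_B\ge 1$ versus $qr_B<1$. Two minor caveats: (i) the paper proves ubiquity with respect to $M\rho_i$ rather than $\rho_i$ (as you also do), absorbing the constant afterward; (ii) your claim that \eqref{f8} alone rules out $q=0$ is not literally correct --- one can have $\epsilon_i\ge 1$ for some $i$ even under \eqref{f6} and \eqref{f8} --- but this is a standard technicality (the constraint for such $i$ is vacuous, and one reduces to lower-dimensional Dirichlet) that the paper likewise glosses over.
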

\begin{proof} We will give a detailed proof for the case of linear forms later, see Lemma \ref{l5.3}.
Then Lemma \ref{l4.2} follows by taking $h=1$ and $\Psi(q)=q$ there.
\end{proof}

\begin{lem} For all $1\le i\le d$, $$
\psi_i(q)\le \rho_i(q), \ \ \ \rho_i(q)\to 0,\ \ \psi_i\ {\text{is $\lambda$-regular}}.
$$
\end{lem}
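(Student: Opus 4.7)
The three assertions are routine consequences of the normalizations (\ref{f6}) and (\ref{f8}) together with the choice $u_n = M^n$; I will verify them in sequence, and I expect no substantive obstacle — the conceptual work has already been invested in setting up $\rho_i$ in~(\ref{f7}) so that these properties hold.

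For the inequality $\psi_i(q) \le \rho_i(q)$, the plan is simply to form the ratio. From the definitions of $\psi_i$ and $\rho_i$,
$$\frac{\rho_i(q)}{\psi_i(q)} = \left(q\prod_{j=1}^d \varphi_j(q)\right)^{-1/d},$$
and by (\ref{f6}) the quantity in parentheses is at most $1$ for all $q \gg 1$, so the ratio is $\ge 1$.

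For $\rho_i(q) \to 0$, I would raise to the $d$-th power and write
$$\rho_i(q)^d \;=\; \frac{\varphi_i(q)^d}{q^{d+1}\,\prod_{j=1}^d \varphi_j(q)}.$$
The denominator is bounded below by $q^{1/2}$ thanks to (\ref{f8}), while the numerator is bounded above (by $\varphi_i(1)^d$, say, since $\varphi_i$ is non-increasing). Thus $\rho_i(q)^d = O(q^{-1/2}) \to 0$.

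For the $\lambda$-regularity of $\psi_i$ along $\{u_n\}$, the monotonicity of $\varphi_i$ together with $u_{n+1}/u_n = M$ gives
$$\psi_i(u_{n+1}) \;=\; \frac{\varphi_i(M^{n+1})}{M^{n+1}} \;\le\; \frac{\varphi_i(M^{n})}{M^{n+1}} \;=\; \frac{1}{M}\,\psi_i(u_n),$$
so one may take $\lambda = 1/M$, which lies in $(0,1)$ since $M \ge 2^{2d+3} \ge 2$. No obstacle arises; the only thing worth flagging is that the freedom to reduce to the regime (\ref{f6})--(\ref{f8}) at the start of the proof of Corollary \ref{corsim} is precisely what makes this verification automatic.
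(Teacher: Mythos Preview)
Your proof is correct and follows essentially the same approach as the paper: the first inequality via the ratio and (\ref{f6}), the $\lambda$-regularity via monotonicity of $\varphi_i$ along the sequence $u_n=M^n$, and $\rho_i(q)\to 0$ by bounding the numerator $\varphi_i(q)$ and invoking the divergence in (\ref{f8}). The paper phrases the last step as ``replacing $\varphi_i(q)$ by $1$'' in (\ref{f7}), which is exactly your bounding of the numerator by a constant.
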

\begin{proof}
  The first 
  inequality is clear by (\ref{f6}) and (\ref{f7}). For the third 
  condition, by the monotonicity of $\varphi_i$, one has $$
\psi_i(M^{n+1})=\frac{\varphi_i(M^{n+1})}{M^{n+1}}\le \frac{\varphi_i(M^{n})}{M^{n+1}}=\frac{1}{M}\cdot \psi_i(M^n).
$$ For the second one, replacing $\varphi_i(q)$ by 1 in (\ref{f7}), it suffices to show that $$
q^{d+1}\prod_{i=1}^d\varphi_i(q)\to \infty, \ {\text{as}}\ q\to \infty,
$$ which follows from (\ref{f8}).
\end{proof}

At last, we notice that \begin{align*}
  \sum_{n=1}^{\infty}\prod_{i=1}^d\frac{\psi_i(u_n)}{\rho_i(u_n)}=\sum_{n=1}^{\infty}M^n\prod_{i=1}^d\varphi_i(M^n)\asymp \sum_{q=1}^{\infty}\prod_{i=1}^d\varphi_i(q).
\end{align*}
Thus all the conditions in Theorem \ref{t1} are satisfied, and then it yields that $$
\mathcal{L} \big({W}(\Phi)\big)=1  \ {\text{if}}\ \ \sum_{q=1}^{\infty}\prod_{i=1}^d\varphi(q)=\infty.
$$

The convergence part of Corollary \ref{corsim} follows from the convergence part of the Borel-Cantelli Lemma, which finishes the proof of the corollary.
\end{proof}

\section{Systems of Linear Forms} \label{lin}

In this section, we prove Theorem {\ref{schmidt}} by applying Theorem \ref{t1}. The main task is to find the suitable ubiquitous function. Recall that
$\{\varphi_i\}_{1\le i\le d}$ are $d$ non-increasing positive functions defined on $\mathbb{Z}_{\ge 0}$ with $$\varphi_i(u)\to 0, \ {\text{as}}\ u\to \infty,$$
 and $\{\Phi_k\}_{1\le k\le h}$ are $h$ non-decreasing integer valued functions with
$$\Phi_k:\N\to \N,\ \ \Phi_k(u)\to \infty, \ {\text{as}}\ u\to \infty.$$
Recall that we are considering  the set
\begin{align*}
 W(\varphi,\Phi)&= \bigg\{A\in [0,1]^{dh}: {\text{the system}} \left\{
                        \begin{array}{ll}
                          \|A_i\q\|<\varphi_i(u), 1\le i\le d, & \\
                          |q_k|\le\Phi_k(u), \ \ \, 1\le k\le h, &
                        \end{array}
                      \right. \\
                     & \qquad \qquad\qquad\qquad\qquad\qquad {\text{has a solution in $\q\in \Z^{h}\smallsetminus \{0\}$}} \ {\text{for infinitely many}}\ u\in \N
\bigg\}\\
&=\limsup_{\q\in \Z^h}E_{\q}(\varphi, \Phi),
\end{align*}
where
$$E_{\q}(\varphi, \Phi) :=\left\{A\in [0,1]^{dh}: \|A_i\q\|<\varphi_i\big(\max\{\Phi_1^{-1}(|q_1|^+),\dots, \Phi_h^{-1}(|q_h|^+)\}\big), \ 1\le i\le d, \ {\text{i.m.}}\ \q\in \Z^h\right\}.$$



 We begin with a technical lemma which enables us to choose the ubiquitous functions fulfilling the conditions that $\rho_i(u)\to 0$ and $\psi_i\le \rho_i$.
\begin{lem}\label{l3.1}  Assume that there exists $M>1$ such that for all $n\gg 1$, $$c_1\Phi_k(M^n)\le \Phi_k(M^{n+1})\le c_2 \Phi_k(M^n), \ 1\le k\le h$$ for some absolute 
 constants $c_1, c_2>1$. 
 Then, without changing the measure of $W(\varphi, \Phi)$,
we can assume, without loss of generality, that
 \begin{align}\label{e6}\Big(\max_{1\le k\le h}\Phi_k(u)\Big)^d\cdot \prod_{k=1}^h\Phi_k(u)\cdot\prod_{i=1}^d\varphi_i(u)\to \infty  \ {\text{when}}\ u\to \infty.\end{align}
\end{lem}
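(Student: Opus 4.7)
The plan is to introduce a modified tuple $\overline{\Psi} = (\overline{\varphi}_i)_{i=1}^d$ by
\[
\overline{\varphi}_i(u) := \max\{\varphi_i(u),\theta_i(u)\}, \quad 1 \le i \le d,
\]
for carefully selected positive auxiliary functions $\theta_i$ with $\theta_i(u) \to 0$. Since $\overline{\varphi}_i \ge \varphi_i$, the inclusion $W(\varphi, \Phi) \subseteq W(\overline{\varphi}, \Phi)$ is automatic, and the goal is to choose $\theta_i$ so that
\begin{itemize}
\item[\textbf{(a)}] $\prod_i \theta_i(u) \cdot (\max_k \Phi_k(u))^d \prod_k \Phi_k(u) \to \infty$, which forces \eqref{e6} for $\overline{\Psi}$;
\item[\textbf{(b)}] $\mathcal{L}(W(\overline{\Psi},\Phi) \setminus W(\varphi, \Phi)) = 0$.
\end{itemize}

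For part (b) I would use that the difference set is contained in $\limsup_{\q} (E_\q(\overline{\Psi}) \setminus E_\q(\varphi))$, and that the Lebesgue measure on $[0,1]^{dh}$ factors over the rows $A_i$. Combining this with the telescoping inequality $\prod_i \overline{\varphi}_i - \prod_i \varphi_i \le \sum_{\emptyset \neq S \subseteq \{1,\dots,d\}}\prod_{i \in S}\theta_i \prod_{i \notin S}\varphi_i$, one obtains
\[
\mathcal{L}\bigl(E_\q(\overline{\Psi}) \setminus E_\q(\varphi)\bigr) \le 2^d \sum_{\emptyset \neq S}\prod_{i \in S}\theta_i(u_\q)\prod_{i \notin S}\varphi_i(u_\q).
\]
Summing over $\q$ and invoking the counting estimate $\#\{\q \in \mathbb{Z}^h : u_\q \in [M^{n-1},M^n]\} \asymp \prod_k \Phi_k(M^n)$ (which follows from the regularity hypothesis on $\Phi_k$), the convergence half of the Borel--Cantelli lemma reduces (b) to the requirement that for each nonempty $S$,
\[
\sum_{n \ge 1} \prod_k \Phi_k(M^n)\prod_{i \in S}\theta_i(M^n)\prod_{i \notin S}\varphi_i(M^n) < \infty.
\]

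The main task, and the principal obstacle, is to select $\theta_i$ realizing (a) and (b) simultaneously. The hypothesis pins down $\Phi_k(M^n)$ between the geometric rates $c_1^n$ and $c_2^n$, and is invariant under the replacement $M \mapsto M^K$ (with $c_j \mapsto c_j^K$), so one may assume $M$ is as large as is convenient. A natural attempt is to put $\theta_i(u) := \bigl(h(u) / (\Phi_*(u)^d \prod_k \Phi_k(u))\bigr)^{1/d}$ for a slowly diverging $h$ (e.g.\ $h(u) = \log u$); this delivers (a) automatically, since $\prod_i \theta_i \cdot \Phi_*^d \prod_k \Phi_k = h(u) \to \infty$, and the Borel--Cantelli series in (b) is dominated by the exponential decay coming from $\Phi_*(M^n)^{-1}$, which after enlarging $M$ overpowers the polynomial growth of $\prod_k \Phi_k(M^n)^{1-1/d}$ and the logarithmic factors. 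The delicate exponent bookkeeping in (b), together with the freedom to boost $M$, is what resolves the tension between the two requirements and completes the reduction to the case where \eqref{e6} holds.
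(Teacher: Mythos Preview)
Your overall strategy---replace $\varphi_i$ by $\overline{\varphi}_i=\max\{\varphi_i,\theta_i\}$ and kill the difference set by Borel--Cantelli---is reasonable, but the execution has a genuine gap: the uniform choice $\theta_i(u)=\bigl(h(u)/(\Phi_*(u)^d\prod_k\Phi_k(u))\bigr)^{1/d}$ controls only the \emph{full} product $\prod_i\theta_i$, not the cross terms $\prod_{i\in S}\theta_i\prod_{i\notin S}\varphi_i$ when $|S|<d$ and the remaining $\varphi_i$ decay slowly. Concretely, take $d=h=2$ and $\Phi_1(u)=\Phi_2(u)=u$, so that $\theta_i(u)=(\log u)^{1/2}u^{-2}$. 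For $S=\{1\}$ your series is
\[
\sum_n \Phi_1(M^n)\Phi_2(M^n)\,\theta_1(M^n)\,\varphi_2(M^n)\ \asymp\ \sum_n n^{1/2}\,\varphi_2(M^n),
\]
which diverges as soon as $\varphi_2$ decays as slowly as $1/\log u$. The assertion that ``the exponential decay coming from $\Phi_*(M^n)^{-1}$ \dots\ overpowers the polynomial growth of $\prod_k\Phi_k(M^n)^{1-1/d}$'' is where the argument fails: $\prod_k\Phi_k(M^n)$ grows \emph{geometrically} in $n$, not polynomially, and passing from $M$ to $M^K$ merely reparametrises the sum (both rates acquire the same exponent $K$), so it cannot turn divergence into convergence.

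The paper avoids this obstacle by not taking a coordinatewise maximum. It interpolates instead: for each $u$ it either leaves \emph{all} the $\varphi_i(u)$ untouched (on the set $\mathcal N_1$ where the product in \eqref{e6} already exceeds $f(u)$), or it simultaneously lifts all the $\widetilde{\varphi}_i(u)$ along the segment towards the previous values so that the full product $\prod_i\widetilde{\varphi}_i(u)$ equals exactly $f(u)/\bigl(\Phi_*(u)^d\prod_k\Phi_k(u)\bigr)$. Because of this dichotomy, one only needs Borel--Cantelli for the sets $E_{\q}(\widetilde{\varphi},\Phi)$ with $u_{\q}\notin\mathcal N_1$, and there the \emph{full} product is prescribed; no mixed products of fast $\theta$'s with slow $\varphi$'s ever appear. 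If you wish to rescue the $\max$ route you would have to let the $\theta_i$ depend on $i$ and on the individual decay rates of the $\varphi_j$; a single universal $\theta$ cannot absorb the asymmetry between fast and slow $\varphi_i$.
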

\begin{proof} We define a new collection of functions $\widetilde{\varphi}_i$ for $1\le i\le d$ satisfying 
condition (\ref{e6}) and $$
\mathcal{L}\big(W(\widetilde{\varphi}, \Phi)\big)= \mathcal{L}\big(W({\varphi}, \Phi)\big).
$$
Fix an increasing function $f$, say 
$f(u)=\big(\max_{1\le k\le h}\Phi_k(u)\big)^{\epsilon}$ for example, which tends to infinity with a slow speed as $u\to \infty$, and check that $$
\frac{f(u)}{\Big(\max_{1\le k\le h}\Phi_k(u)\Big)^d\cdot \prod_{k=1}^h\Phi_k(u)}\ \ {\text{is decreasing with respect to}}\ u.
$$

Partition 
$\N$ into two classes: $$
\mathcal{N}_1=\Big\{u\in \N: \Big(\max_{1\le k\le h}\Phi_k(u)\Big)^d\cdot \prod_{k=1}^h\Phi_k(u)\cdot\prod_{i=1}^d\varphi_i(u)\ge f(u)\Big\}
$$ and its complement.

We can assume $\mathcal{N}_1$ to be non-empty 
by redefining
$\varphi_i(1)$ and $\Phi_i(1)$ so that they are 
large enough. 
Let $u_0$ be the smallest element in $\mathcal{N}_1$. We define a new collection of functions $\widetilde{\varphi}_i$ for $u\ge u_0$ with $1\le i\le d$ inductively as follows:
\begin{enumerate}
\item For $u=u_0$, define $$
\widetilde{\varphi}_i(u)=\varphi_i(u), \  \ {\text{for all}}\ 1\le i\le d;
$$

\item Let $u=u_0+1$.
\begin{itemize}
  \item if $u\in \mathcal{N}_1$, define $$
\widetilde{\varphi}_i(u)=\varphi_i(u), \  \ {\text{for all}}\ 1\le i\le d;
$$
\item if $u\not\in \mathcal{N}_1$, we increase the value of $\varphi_i(u)$ as follows. Define a function $$
G(t)=\prod_{i=1}^d\Big(t\widetilde{\varphi}_i(u_0)+(1-t)\varphi_i(u)\Big),\  \ {\text{for}}\ t\in [0,1].
$$ Then \begin{align*}
G(1)=\prod_{i=1}^d\widetilde{\varphi_i}(u_0)&\ge \frac{f(u_0)}{\Big(\max_{1\le k\le h}\Phi_k(u_{0})\Big)^d\cdot\prod_{k=1}^h\Phi_k(u_{0})}\\
&\ge \frac{f(u)}{\Big(\max_{1\le k\le h}\Phi_k(u)\Big)^d\cdot\prod_{k=1}^h\Phi_k(u)}\ge \prod_{i=1}^d{\varphi_i}(u)=G(0),
\end{align*}where the last inequality holds because $u\not\in \mathcal{N}_1$. So there exists some $t^*\in [0,1]$ such that $$
G(t^*)=\frac{f(u)}{\Big(\max_{1\le k\le h}\Phi_k(u)\Big)^d\cdot\prod_{k=1}^h\Phi_k(u)}.
$$ Thus define $$
\widetilde{\varphi_i}(u)=t^*\widetilde{\varphi}_i(u_0)+(1-t^*)\varphi_i(u), \ \ 1\le i\le d.
$$ It is clear that \begin{equation}\label{g1}
\widetilde{\varphi_i}(u_0)\ge \widetilde{\varphi_i}(u)\ge \varphi_i(u).
\end{equation}
\end{itemize}

\item Assume that $\widetilde{\varphi}_i(u')$ for all $1\le i\le d$ have been defined. Then for 
$u'' = u'+1$ the process is the same with the role of $\widetilde{\varphi_i}(u_0)$ and $\varphi_i(u)$ replaced by $\widetilde{\varphi_i}(u')$ and $\varphi_i(u'')$ respectively.\end{enumerate}

To summarize, for the new functions $\widetilde{\varphi_i}(u)$ for $1\le i\le d$  one has: \begin{itemize}
  \item by (\ref{g1}), 
  $$\widetilde{\varphi_i}\  \ {\text{is decreasing,  and}},\ \   \widetilde{\varphi_i}\ge \varphi_i;$$

\item for $u\in \mathcal{N}_1$, \begin{equation}\label{f9}
\widetilde{\varphi_i}(u)=\varphi_i(u);
\end{equation}

\item for $u\not\in \mathcal{N}_1$, $$
\Big(\max_{1\le k\le h}\Phi_k(u)\Big)^d\cdot \prod_{k=1}^h\Phi_k(u)\cdot \prod_{i=1}^d\widetilde{\varphi_i}(u)=f(u).
$$
\end{itemize}

Finally, we consider the measure of the set $W(\widetilde{\varphi}, \Phi)$. Write $$
\mathcal{M}_1=\big\{\q=(q_1,\dots, q_h)\in \Z^h: u=\max\{\Phi_1^{-1}(|q_1|^+),\dots, \Phi_h^{-1}(|q_h|^+)\}\in \mathcal{N}_1\big\}.
$$ By (\ref{f9}), one sees that \begin{align*}
 W(\widetilde{\varphi}, \Phi)&=\limsup_{\q\in \mathcal{M}_1}E_{\q}(\widetilde{\varphi}, \Phi)\cup \limsup_{\q\not\in \mathcal{M}_1}E_{\q}(\widetilde{\varphi}, \Phi)\\
&\subset W({\varphi}, \Phi)\cup \limsup_{\q\not\in \mathcal{M}_1}E_{\q}(\widetilde{\varphi}, \Phi)
\end{align*}
We claim that the second set is of measure zero by the convergence part of the Borel-Cantelli lemma, which results in $$
\mathcal{L}\big(W(\widetilde{\varphi}, \Phi)\big)= \mathcal{L}\big(W({\varphi}, \Phi)\big)
$$ as wanted. More precisely, \begin{align*}
  \sum_{\q\not\in \mathcal{M}_1}\mathcal{L}\big(E_{\q}(\widetilde{\varphi}, \Phi)\big)
&=\sum_{u\not\in \mathcal{N}_1}\sum_{\q\in \Z^h: u=\max\{\Phi_1^{-1}(|q_1|^+),\dots, \Phi_h^{-1}(|q_h|^+)\}}\ \mathcal{L}\big(E_{\q}(\widetilde{\varphi}, \Phi)\big)\\
&=\sum_{u\not\in \mathcal{N}_1}\sum_{\q\in \Z^h: u=\max\{\Phi_1^{-1}(|q_1|^+),\dots, \Phi_h^{-1}(|q_h|^+)\}}\ \prod_{i=1}^d\widetilde{\varphi}_i(u)\\
&\le\sum_{u\in \N}\sum_{\q\in \Z^h: u=\max\{\Phi_1^{-1}(|q_1|^+),\dots, \Phi_h^{-1}(|q_h|^+)\}}\frac{f(u)}{\Big(\max_{1\le k\le t}\Phi_k(u)\Big)^d\cdot\prod_{k=1}^h\Phi_k(u)}.\end{align*}
By the monotonicity of the terms in the summation and by dividing the integers $u\in \N$ into $M$-adic blocks, one has
\begin{align*}
\sum_{\q\not\in \mathcal{M}_1}\mathcal{L}\big(E_{\q}(\widetilde{\varphi}, \Phi)\big)
&\le 2^h \sum_{t=0}^{\infty}\prod_{k=1}^h\Phi_k(M^{t+1}) \cdot \frac{1}{\big(\max_{1\le k\le h}\Phi_k(M^{t})\big)^{d-\epsilon}\cdot\prod_{k=1}^h\Phi_k(M^{t})}\\
&\le 2^{h}\cdot c_2^h\cdot \sum_{t=0}^{\infty}{\big(c_1^{-t}\big)^{d-\epsilon}}<\infty,\end{align*}
where the second inequality uses the assumptions posed on $\Phi$.
Thus, by the Borel-Cantelli lemma, $
  \limsup_{\q\not\in \mathcal{M}_1}\mathcal{L}\big(E_{\q}(\widetilde{\varphi}, \Phi)\big)=0.
$
\end{proof}

We are now ready to proceed with Theorem \ref{schmidt}.
\begin{proof}[Proof of Theorem \ref{schmidt}]
By Lemma \ref{l3.1}, we can assume that the functions $\varphi, \Phi$ satisfy the conclusion given there. Moreover, without loss of generality we can assume that \begin{align}\label{g2}
\prod_{i=1}^d\varphi_i(u)\cdot \prod_{k=1}^h\Phi_k(u)\le 1, \ {\text{for all}}\ u\in \N
\end{align} otherwise, by Minkowski's convex body theorem, it is trivial that $W(\varphi, \Phi)$ is of full measure.

Now let us check that all the conditions in Theorem \ref{t1} are satisfied.

\begin{itemize}
\item The index set $J$:
$$\alpha=(q_1,\dots, q_h, p_1,\dots, p_d): \q\in \Z^h, \ |p_i|\le h\cdot \max_{1\le k\le h}{|q_k|}.$$

\item The weight function:
$$\beta_{\alpha}=\max\big\{\Phi_1^{-1}(|q_1|^+),\dots, \Phi_h^{-1}(|q_h|^+)\big\}, \ {\text{for}}\ \alpha=(q_1,\dots, q_h, p_1,\dots, p_d).$$

\item Resonant sets:
$$\R_{\alpha}=\prod_{i=1}^d\big\{A_i: A_i\q=p_i\big\}, \  \ {\text{for}}\ \alpha=(q_1,\dots, q_h, p_1,\dots, p_d).$$

\item $u_n=M^n$ for some integer $M\ge 2^{2d+3}$, and $\ell_n$ is defined later in (\ref{ln}).

\item The ubiquitous function: \begin{equation}\label{f10}
\rho_i(u)=\frac{M\cdot \varphi_i(u)}{\max_{1\le k\le h}\Phi_k(u)}\cdot \left(\prod_{i=1}^d\varphi_i(u)\cdot \prod_{k=1}^h\Phi_k(u)\right)^{-1/d}, \ {\text{for all}} \ 1\le i\le d.
 \end{equation} We will show in Lemma \ref{l5.3} that $(\{\R_{\alpha}\}_{\alpha\in J}, \beta)$ forms a ubiquitous system with respect to the ubiquitous function $\rho$ and the sequences $\{\ell_n, u_n\}_{n\ge 1}$.

\item The approximating function: \begin{equation}\label{f11}
\psi_i(u)=\frac{1}{h}\cdot \frac{\varphi_i(u)}{\max_{1\le k\le h}\Phi_k(u)}, \ {\text{for all}} \ 1\le i\le d.
\end{equation}
Note that for any $A$ such that $$
A\in \prod_{i=1}^d\Delta\left(\R_{\alpha, i}, \frac{1}{h}\cdot\frac{\varphi_i(u)}{\max_{1\le k\le h}\Phi_k(u)}\right)\ \ {\text{with}}\ \ \alpha=(q_1,\dots,q_h, p_1,\dots, p_d), \ \beta_{\alpha}=u,
$$  one has $$
|A_i\q-p_i|<|\q|\cdot \frac{\varphi_i(u)}{\max_{1\le k\le h}\Phi_k(u)}\le \varphi_i(u), \ {\text{for}}\ 1\le i\le d.
$$
 In other words, $$
\bigcup_{\alpha\in J_n}\big\{A: |A_i\q-p_i|<\varphi_i(u), \ 1\le i\le d \big\}\supset \bigcup_{\alpha\in J_n}\prod_{i=1}^d\Delta\big(\R_{\alpha,i}, \psi_i(u_n)\big).
$$ Therefore $$
W(\varphi, \Phi)\supset \limsup_{n\to\infty}\bigcup_{\alpha\in J_n}\prod_{i=1}^d\Delta\big(\R_{\alpha,i}, \psi_i(u_n)\big).
$$
\end{itemize}

%
%
%
%

Now let $$
\widetilde{\rho}_i(u)={\varphi_i(u)}\cdot \left(\prod_{i=1}^d \varphi_i(u)\cdot \prod_{k=1}^h\Phi_k(u)\right)^{-1/d}, \ \ 1\le i\le d.
$$ 

\begin{lem}[Ubiquity for rectangles]\label{l5.3} With the notation above, the 
pair $(\{\R_{\alpha}\}_{\alpha\in J}, \beta)$ is a ubiquitous system with respect to the function $\rho$ and the sequences $\{\ell_n, u_n\}_{n\ge 1}$.
\end{lem}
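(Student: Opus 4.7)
The plan is to verify the ubiquity condition (\ref{ff5}) pointwise by applying Minkowski's convex body theorem to each $A\in B$ and then excluding a small subset where the Minkowski output is too small. Fix a ball $B\subset [0,1]^{dh}$. For each $A$ and $n\gg 1$, consider the symmetric convex body
$$
C_n=\prod_{k=1}^h\bigl[-\Phi_k(u_n),\Phi_k(u_n)\bigr]\times \prod_{i=1}^d\bigl[-\widetilde\rho_i(u_n),\widetilde\rho_i(u_n)\bigr]\subset \mathbb{R}^{h+d},
$$
of volume $2^{h+d}\cdot\prod_k\Phi_k(u_n)\prod_i\widetilde\rho_i(u_n)=2^{h+d}$ thanks to the built-in identity $\prod_i\widetilde\rho_i(u)\cdot\prod_k\Phi_k(u)\equiv 1$, together with the unimodular lattice $\Lambda_A=\{(\q,A\q-\p):(\q,\p)\in\Z^{h+d}\}$. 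Minkowski's theorem then produces a non-zero lattice point, i.e.\ $(\q,\p)\ne 0$ with $|q_k|\le \Phi_k(u_n)$ and $|A_i\q-p_i|\le \widetilde\rho_i(u_n)$; the assumptions (\ref{e6}) and (\ref{g2}) ensure $\widetilde\rho_i(u_n)<1$ eventually, which rules out $\q=0$.

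Next I would remove those $A$ whose Minkowski output has too small a ``maximal'' coordinate. Let $k^*=k^*(n)$ realize $Q(u_n)=\max_k\Phi_k(u_n)$ and set $\ell_n:=M^{n-K}$ for an integer $K$ to be chosen. Call $A\in B$ \emph{bad} if every Minkowski output $(\q,\p)$ as above has $|q_{k^*}|<\Phi_{k^*}(\ell_n)$. A union bound over such $\q$, combined with the Fubini-type estimate
$\sum_{p_i\in\Z}\mu_i(B_i\cap\{A_i:|A_i\q-p_i|\le \widetilde\rho_i(u_n)\})\ll \mu_i(B_i)\,\widetilde\rho_i(u_n)$
(valid for $\q\ne 0$ and $n\ge n_o(B)$), then yields
$$
\mu(B\cap B_n^{\mathrm{bad}})\ll \mu(B)\cdot \prod_i\widetilde\rho_i(u_n)\cdot \Phi_{k^*}(\ell_n)\prod_{k\ne k^*}\Phi_k(u_n)=\mu(B)\cdot \frac{\Phi_{k^*}(\ell_n)}{\Phi_{k^*}(u_n)}\le \mu(B)\cdot c_1^{-K},
$$
by the growth hypothesis iterated $K$ times. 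For $K$ large enough (depending on $c_1$ and the implicit constants), this is $\le \tfrac12\mu(B)$. For any non-bad $A\in B$, the corresponding output satisfies $|q_{k^*}|\ge \Phi_{k^*}(\ell_n)\ge Q(u_n)/c_2^K$, so $\alpha=(\q,\p)\in J_n$ since $\beta_\alpha\ge \Phi_{k^*}^{-1}(|q_{k^*}|)\ge \ell_n$; and the sup-norm distance from $A_i$ to the hyperplane $\R_{\alpha,i}=\{A_i:A_i\q=p_i\}$ is
$$
\frac{|A_i\q-p_i|}{\|\q\|_1}\le \frac{\widetilde\rho_i(u_n)}{|q_{k^*}|}\le \frac{c_2^K}{M}\cdot \rho_i(u_n),
$$
which is $\le \rho_i(u_n)$ provided $M\ge c_2^K$. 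Hence $A\in \prod_{i=1}^d\Delta(\R_{\alpha,i},\rho_i(u_n))$ for this $\alpha\in J_n$, giving $\mu(B\cap\bigcup_{\alpha\in J_n}\prod_i\Delta(\R_{\alpha,i},\rho_i(u_n)))\ge \tfrac12\mu(B)$ and establishing ubiquity with $c=\tfrac12$.

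The main obstacle is reconciling the two constraints $c_1^{-K}\ll 1$ (shrinking the bad set) and $M\ge c_2^K$ (closing the distance estimate): the first forces $K$ to be large compared to $c_1$, and the second then forces $M$ to be at least $c_2^K$. One resolves this by first fixing $K$ in terms of $c_1$ and the union-bound constants, and then, if necessary, replacing the growth-hypothesis base $M$ by a sufficiently large power $M^L$---this rescales the growth constants to $(c_1^L,c_2^L)$, simply passes to a sparser subsequence $\{M^{Ln}\}$, and neither weakens the assumption nor affects the divergence test in Theorem \ref{schmidt}. A secondary technicality is the Fubini-type estimate invoked above, which relies on near-equidistribution of $A_i\q\bmod 1$ over $B_i$; this is a standard computation once $r_B\|\q\|_1\gtrsim 1$, which is why we only claim the ubiquity for $n\ge n_o(B)$.
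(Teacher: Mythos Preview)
Your overall strategy---apply Minkowski, then show that the set of $A\in B$ whose Minkowski output has small $k^*$-coordinate has measure at most $\tfrac12\mu(B)$---is the paper's, and your bad-set count and Fubini estimate are fine (the small-$\q$ contribution is indeed lower order once $n\ge n_o(B)$, as you note).

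The gap is in your resolution of the $K$-versus-$M$ constraint. Passing to $M'=M^L$ rescales the growth constants to $(c_1^L,c_2^L)$, so the distance requirement becomes $M'\ge(c_2')^{K}$, i.e.\ $M^L\ge c_2^{LK}$, which is the \emph{same} inequality $M\ge c_2^{K}$ as before; even re-optimising $K$ after the replacement forces $K\ge 1$ and hence $M\ge c_2$, which the hypothesis does not guarantee (e.g.\ $\Phi_k(u)=u^2$ gives $c_2=M^2>M$). Two easy repairs are available. First, the prefactor $M$ in the definition (\ref{f10}) of $\rho_i$ is just a free normalisation: replacing it by any fixed constant $M_0\ge c_2^{K}$ yields $\text{dist}\le\rho_i(u_n)$ outright and affects neither $\rho_i\to 0$, nor $\psi_i\le\rho_i$, nor the divergence test. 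Second---and this is what the paper does---take the bad-set threshold to be $\Phi_{k}(u_n)/M$ (a direct division) rather than $\Phi_{k^*}(\ell_n)$. Then on the good set one has $\max_k|q_k|\ge Q(u_n)/M$ immediately, so the distance bound is exactly $\rho_i(u_n)$ with no $c_2$ entering, while the bad-set bound becomes $\ll\mu(B)/M$, for which the $M\mapsto M^L$ trick \emph{does} work. (The paper also controls all $h$ coordinates simultaneously rather than just $k^*$, but your single-coordinate version is sufficient and slightly more economical.)
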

\begin{proof} For any $u\in \N$,
by Minkowski's theorem, for any fixed matrix $A\in [0,1]^{dh}$ there exists a non-zero integer vector $(q_1,\dots, q_h, p_1,\dots, p_d)$ such that
\begin{align*}
\left\{
                                 \begin{array}{ll}
                                    |A_i\q-p_i|<\widetilde{\rho}_i(u), & \hbox{$1\le i\le d$;} \\
                                   |q_k|\le \Phi_k(u) & \hbox{$1\le k\le h$}.
                                 \end{array}
                               \right.
\end{align*} In other words, for any $u\in \N$ and $A\in [0,1]^{dh}$, there exists $\alpha\in J$ with $\beta_{\alpha}\le u$ such that\begin{align}\label{a1}
A\in \prod_{i=1}^d\Delta\left(\R_{\alpha, i}, \frac{\widetilde{\rho}_i(u)}{\max_{1\le k\le h}|q_k|}\right).
\end{align}

Recall $u_n=M^{n}$ and choose $\ell_n$ small enough such that \begin{align}\label{ln}
J_n&=\{\alpha: \ell_n\le \beta_{\alpha}\le u_n\}\nonumber\\
&\supset \Big\{(q_1,\dots, q_h, p_1,\dots, p_d): \frac{1}{M}\Phi_k(M^{n})\le |q_k|^+\le \Phi_k(M^{n}), 1\le k\le h\Big\}:=\widetilde{J}_n.
\end{align} Thus
\begin{align*}
  \Big\{\alpha\in J: \beta_{\alpha}\le u_n\Big\}&= \widetilde{J}_n\cup \bigcup_{j=1}^h\Big\{\alpha\in J: |q_j|\le \frac{\Phi_j(M^{n})}{M},\ |q_k|\le \Phi_k(M^{n}), \ {\text{for all}}\  k\ne j\Big\}\\
  :&=\widetilde{J}_n\cup \bigcup_{j=1}^hJ_{n,j}.
\end{align*}

Let $B=\prod_{i=1}^dB(x_i, r)$ be a ball in $[0,1]^{dh}$. Taking $u=u_n$ in (\ref{a1}), one has\begin{align*}
B&=B\cap \bigcup_{\alpha:\beta_{\alpha}\le u_n}\prod_{i=1}^d\Delta\left(\R_{\alpha, i}, \frac{\widetilde{\rho}_i(u_n)}{\max_{1\le k\le h}|q_k|}\right)\\
&=\left(B\cap \bigcup_{\alpha\in \widetilde{J}_n}\prod_{i=1}^d\Delta\left(\R_{\alpha, i}, \frac{\widetilde{\rho}_i(u_n)}{\max_{1\le k\le h}|q_k|}\right)\right)\bigcup \left(B\cap \bigcup_{j=1}^h\bigcup_{\alpha\in J_{n,j}}\prod_{i=1}^d\Delta\left(\R_{\alpha, i}, \frac{\widetilde{\rho}_i(u_n)}{\max_{1\le k\le h}|q_k|}\right)\right)\\
&=I_1\cup I_2.
\end{align*} We give an upper bound estimation on the measure of $I_2$:\begin{align*}
  \mathcal{L}(I_2)\le \sum_{j=1}^h\sum_{|q_j|\le \frac{\Phi_j(u_n)}{M}; |q_k|\le \Phi_k(u_n),  k\ne j}\ \sum_{p_1,\dots, p_d}\prod_{i=1}^d\mathcal{L}\left(B(x_i, r)\cap \Delta\left(\R_{\alpha, i}, \frac{\widetilde{\rho}_i(u_n)}{\max_{1\le k\le h}|q_k|}\right)\right)
\end{align*} For any fixed $(q_1,\dots, q_h)$, the number of $p_i$ such that the intersection is non-empty is at most $2r\cdot\max_{1\le k\le h}|q_k|+2$. Thus it follows that
\begin{align*}
  \mathcal{L}(I_2)&\le\sum_{j=1}^h\sum_{|q_j|\le \frac{\Phi_j(u_n)}{M}; |q_k|\le \Phi_k(u_n),  k\ne j}\ \Big(2r\cdot\max_{1\le k\le h}|q_k|+2\Big)^d\cdot \prod_{i=1}^d\frac{\widetilde{\rho}_i(u_n)\cdot r^{h-1}}{\max_{1\le k\le h}|q_k|}\\
  &=\sum_{j=1}^h\sum_{|q_j|\le \frac{\Phi_j(u_n)}{M}, |q_k|\le \Phi_k(u_n), \ k\ne j}\Big(2r\cdot\max_{1\le k\le h}|q_k|+2\Big)^d\cdot\left(\max_{1\le k\le h}|q_k|\right)^{-d}\cdot\left(\prod_{k=1}^h\Phi_k(u_n)\right)^{-1}\cdot r^{d(h-1)}.\end{align*}
 Then using a simple inequality that $(a+b)^d\le (2a)^d+(2b)^d$, it follows that\begin{align*}
\mathcal{L}(I_2)
&\le \frac{2^{2d}\cdot r^{dh}}{M}+\frac{h\cdot 2^{2d}\cdot r^{d(h-1)}\log \Phi_1(u_n)}{\Phi_1(u_n)}\le \frac{1}{2}\cdot \mathcal{L}(B)
\end{align*} whenever $M>2^{2d+3}$ and $n$ is large enough. Thus one gets \begin{align*}
  \mathcal{L}\left(B\cap \bigcup_{\alpha\in \widetilde{J}_n}\prod_{i=1}^d\Delta\left(\R_{\alpha, i}, \frac{\widetilde{\rho}_i(u_n)}{\max_{1\le k\le h}|q_k|}\right)\right)\ge \frac{1}{2}\cdot\mathcal{L}(B).
\end{align*}
Note also that for any $\alpha\in \widetilde{J}_n$, $$
\max_{1\le k\le h}|q_k|\ge \frac{1}{M}\cdot \max_{1\le k\le h}\Phi_k(u_k),
$$ which 
implies that \begin{align*}
  &\ \mathcal{L}\left(B\cap \bigcup_{\alpha\in {J}_n}\prod_{i=1}^d\Delta\left(\R_{\alpha, i}, \frac{M\cdot\widetilde{\rho}_i(u_n)}{\max_{1\le k\le h}\Phi_k(u_{n})}\right)\right)\ge \mathcal{L}\left(B\cap \bigcup_{\alpha\in \widetilde{J}_n}\prod_{i=1}^d\Delta\left(\R_{\alpha, i}, \frac{M\cdot\widetilde{\rho}_i(u_n)}{\max_{1\le k\le h}\Phi_k(u_{n})}\right)\right)\\
  \ge &\ \mathcal{L}\left(B\cap \bigcup_{\alpha\in \widetilde{J}_n}\prod_{i=1}^d\Delta\left(\R_{\alpha, i}, \frac{\widetilde{\rho}_i(u_n)}{\max_{1\le k\le h}|q_k|}\right)\right)\ge \frac{1}{2}\cdot\mathcal{L}(B).
\end{align*}
This shows the ubiquity property with the ubiquitous function $$
\rho_i(u)=\frac{M\cdot\widetilde{\rho}_i(u)}{\max_{1\le k\le h}\Phi_k(u)}, \ 1\le i\le d.
$$
\end{proof}

To summarize, we have \begin{itemize}
  \item the ubiquitous system $(\{\R_{\alpha}\}_{\alpha\in J}, \beta)$ with respect to $\rho$:  by Lemma \ref{l5.3}.

\item the $\lambda$-regularity property: by the monotonicity of $\varphi_i$ and the condition assumed on $\Phi_k$, namely$$
\psi_i(u_{n+1})=\frac{\varphi_i(u_{n+1})}{\max_{1\le k\le h}\Phi_k(u_{n+1})}\le \frac{1}{c_1}\cdot \frac{\varphi_i(u_n)}{\max_{1\le k\le h}\Phi_k(u_{n})}=\frac{1}{c_1}\cdot \psi_i(u_n).
$$
\end{itemize} Recall the definitions of $\rho_i$ and $\psi_i$: $$\rho_i(u)=\frac{M\cdot \varphi_i(u)}{\max_{1\le k\le h}\Phi_k(u)}\cdot \left(\prod_{i=1}^d\varphi_i(u)\cdot \prod_{k=1}^h\Phi_k(u)\right)^{-1/d},  \ \  \psi_i(u)=\frac{1}{h}\cdot \frac{\varphi_i(u)}{\max_{1\le k\le h}\Phi_k(u)}.$$
Then one has
\begin{itemize}
\item $\rho_i(u)\ge \psi_i(u)$: by (\ref{g2});

\item $\rho_i(u)\to 0$ as $u\to \infty$: by Lemma \ref{l3.1}, since the denominator in $\rho_i(u)$ tends to infinity.
\end{itemize}

Thus all the conditions in Theorem \ref{t1} are satisfied, and then we apply it to arrive at the desired result, i.e. $$
\mathcal{L}\big(W(\varphi, \Phi)\big)=1 \ \Longleftarrow\ \sum_{t=0}^{\infty}\prod_{i=1}^d\varphi_i(M^t)\prod_{k=1}^h\Phi_k(M^{t})=\infty.
$$
Finally, by monotonicity of $\varphi$ and the assumption on $\Phi$, one has \begin{align*}
  \infty=\sum_{q=1}^{\infty}q^{-1}\cdot \prod_{i=1}^d\varphi_i(q)\cdot \prod_{k=1}^h\Phi_k(q)&=\sum_{t=0}^{\infty}\sum_{M^t\le q<M^{t+1}}q^{-1}\cdot \prod_{i=1}^d\varphi_i(q)\cdot \prod_{k=1}^h\Phi_k(q)\\
  &\le (M-1)c_2^h\cdot \sum_{t=0}^{\infty}\prod_{i=1}^d\varphi_i(M^t)\prod_{k=1}^h\Phi_k(M^{t}).
\end{align*} This proves the divergence part of Theorem \ref{schmidt}, while the convergence part follows easily from the Borel-Cantelli lemma.
\end{proof}

\end{document}